\theoremstyle{plain}
\newtheorem{theorem}{Theorem}[section]
\newtheorem{corollary}[theorem]{Corollary}
\newtheorem{lemma}[theorem]{Lemma}
\newtheorem{proposition}[theorem]{Proposition}
\theoremstyle{definition}
\newtheorem{definition}[theorem]{Definition}
\newtheorem{example}[theorem]{Example}
\newtheorem{remark}[theorem]{Remark}
\begin{document}

\title{Quasicomplemented distributive nearlattices}
\address{Facultad de Ciencias y Artes, Universidad Cat\'olica de \'Avila, C/Canteros s/n, 05005 \'Avila, Espa\~{n}a.}
\email{ismaelm.calomino@ucavila.es}
\author{Ismael Calomino}

\subjclass[2010]{Primary 06A12; Secondary 03G10, 06D75}
\keywords{Distributive nearlattice, annihilator, filter, normal, quasicomplement}

\begin{abstract}
The aim of this paper is to study the class of quasicomplemented distributive nearlattices. We investigate $\alpha$-filters and $\alpha$-ideals in quasicomplemented distributive nearlattices and some results on ideals-congruence-kernels. Finally, we also study the notion of Stone distributive nearlattice and give a characterization by means $\sigma$-filters.
\end{abstract}
\maketitle

\section{Introduction} 

An interesting characterization of the distributivity of a lattice was given by Mandelker in \cite{Ma70}. He note that a lattice is distributive if and only if each relative annihilator is an ideal. This result was extended to different ordered algebraic structures. Varlet in \cite{Va73} proved that a semilatttice is distributive, in the sense of Gr\"{a}tzer \cite{Gra98}, if and only if each relative annihilator is an order-ideal, and Chajda and Kola\v{r}\'{\i}k in \cite{ChaKo07} prove a similar result but in distributive nearlattices. Recall that a distributive nearlattice is a join-semilattice with a greatest element where each principal filter is a bounded distributive lattice. It follows that distributive nearlattices are a natural generalization of semi-boolean algebras \cite{Abb67} and bounded distributive lattices. Concerning the structure and properties of distributive nearlattices, the reader is referred to \cite{Hi80,ChaHaKu07,ChaKo08,ArKin11,CaCe15,CaCe18,Go18,Ca19,CaGo21,Go22}. In particular, in \cite{CaCe15}, the authors presented an alternative definition of relative annihilator in distributive nearlattices different from that given in \cite{ChaKo07} and established new equivalences of the distributivity. Later, using the results developed in \cite{CaCe15}, the notion of annihilator was studied in a series of papers \cite{CaCe18,Ca19,CaGo21}.

The class of quasicomplemented lattices was introduced in \cite{Va68} as a generalization of pseudocomplemented distributive lattices and was studied by Cornish in \cite{Cor73,Cor74} together with the notions of annihilator and $\alpha$-ideal. These results were extended to the class of quasicomplemented distributive semilattices in \cite{CaCe21} and recently to the variety of residuated lattices by Rasouli in \cite{Ra20,Ra21}. Motivated by the previous works, the main aim of this paper is to study the class of quasicomplemented distributive nearlattices. We also introduce Stone distributive nearlattices and obtain a characterization in terms of certain particular filters.

This paper is organized as follows. In Section \ref{Sec2} we recall some definitions and results about distributive nearlattices. To be more precise, we recall the notions of annihilator, normal distributive nearlattice, $\alpha$-filter and $\alpha$-ideal which we are going to need throughout the paper. In Section \ref{Sec3} we study the class of quasicomplemented distributive nearlattices. We give characterizations through $\alpha$-filters and $\alpha$-ideals, and also some results about ideals-congruence-kernels. Finally, in Section \ref{Sec4}, Stone distributive nearlattices are study. We investigate the relationship with quasicomplemented and normal distributive nearlattices and we end with a charaterization of Stone distributive nearlattices in terms of $\sigma$-filters.

\section{Preliminaries} \label{Sec2}

Let ${\bf{A}} = \langle A, \vee, 1 \rangle$ be a join-semilattice with greatest element. A {\it{filter}} is a subset $F$ of $A$ such that $1 \in F$, if $a \leq b$ and $a \in F$, then $b \in F$ and if $a,b \in F$, then $a \wedge b \in F$, whenever $a \wedge b$ exists. Denote by ${\rm{Fi}}(A)$ the set of all filters of $A$. If $X$ is a non-empty subset of $A$, the smallest filter containing $X$ is called the {\it{filter generated by $X$}} and will be denoted by ${\rm{Fig}}(X)$. A filter $G$ is said to be finitely generated if $G={\rm{Fig}}(X)$ for some finite non-empty subset $X$ of $A$. If $X=\{a\}$, then ${\rm{Fig}}(\{a\}) = [a) = \{ x \in A \colon a \leq x \}$ called the {\it{principal filter of $a$}}. A subset $I$ of $A$ is called an {\it{ideal}} of $A$ if $a \leq b$ and $b \in I$, then $a \in I$ and if $a,b \in I$, then $a \vee b \in I$. If $X$ is a non-empty set of $A$, the smallest ideal containing $X$ is called the {\it{ideal generated by $X$}} and will be denoted by ${\rm{Idg}}(X)$. Then ${\rm{Idg}}(X) = \{ a \in A \colon \exists x_{1}, \dots, x_{n} \in X (a \leq x_{1} \vee \dots \vee x_{n}) \}$. If $X=\{a\}$, then ${\rm{Idg}}(\{a\}) = (a] = \{ x \in A \colon x \leq a \}$ called the {\it{principal ideal of $a$}}. A non-empty proper ideal $P$ is {\it{prime}} if for every $a,b \in A$, $a \wedge b \in P$ implies $a \in P$ or $b \in P$, whenever $a \wedge b$ exists. Denote by ${\rm{Id}}(A)$ and ${\rm{X}}(A)$ the set of all ideals and prime ideals of $A$, respectively. Finally, a non-empty ideal $I$ of $A$ is {\it{maximal}} if it is proper and for every $J \in {\rm{Id}}(A)$, if $I \subseteq J$, then $J=I$ or $J=A$. Denote by ${\rm{X}}_{m}(A)$ the set of all maximal ideals of $A$. It follows that ${\rm{X}}_{m}(A) \subseteq {\rm{X}}(A)$.

\subsection{Distributive nearlattices}

We recall the basics results about distributive nearlattices, where our main reference is \cite{ChaHaKu07,ArKin11,CaCe15,Ca19,CaGo21}.

\begin{definition}
Let ${\bf{A}}$ be a join-semilattice with greatest element. We say that ${\bf{A}}$ is a {\it{distributive nearlattice}} if each principal filter is a bounded distributive lattice.
\end{definition}

Distributive nearlattices are in a one-to-one correspondence with certain ternary algebras \cite{Hi80,ChaKo08}. Then a smaller equational base for the ternary algebras is given by Ara\'{u}jo and Kinyon in \cite{ArKin11}. 

\begin{theorem} \cite{ArKin11} \label{ternary operation}
Let ${\bf{A}} = \langle A,\vee, 1 \rangle$ be a distributive nearlattice. Let $m \colon A^{3} \to A$ be the ternary operation given by $m(x,y,z) = (x \vee z) \wedge_{z} (y \vee z)$, where $\wedge_{z}$ denotes the meet in $[z)$. Then the structure ${\bf{A}}_{*} = \langle A, m, 1 \rangle$ satisfies the following identities:
\begin{enumerate}
\item $m(x,x,1) = 1$,
\item $m(x,y,x) = x$,
\item $m(m(x,y,z),m(y,m(u,x,z),z),w) = m(w,w,m(y,m(x,u,z),z))$,
\item $m(x,m(y,y,z),w) = m(m(x,y,w),m(x,y,w),m(x,z,w))$.
\end{enumerate}
Conversely, let ${\bf{A}} = \langle A, m, 1 \rangle$ be an algebra of type $(3,0)$ satisfying the identities $(1) - (4)$. If we define the binary operation $x \vee y = m(x,x,y)$, then ${\bf{A}}^{*} = \langle A, \vee, 1 \rangle$ is a distributive nearlattice. Moreover, $({\bf{A}}_{*})^{*} = {\bf{A}}$ and $({\bf{A}}^{*})_{*} = {\bf{A}}$.
\end{theorem}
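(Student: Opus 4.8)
The plan is to establish the three assertions separately: that $\mathbf{A}_{*}$ satisfies identities $(1)$--$(4)$, that $\mathbf{A}^{*}$ is a distributive nearlattice, and that the two constructions are mutually inverse. Throughout the forward direction I would exploit that for a fixed $z$ the principal filter $[z)$ is a bounded distributive lattice with least element $z$ and greatest element $1$, that $x\vee z, y\vee z \in [z)$, and that $m(y,y,z) = (y\vee z)\wedge_{z}(y\vee z) = y\vee z$.

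For the forward direction, identities $(1)$ and $(2)$ are immediate: $m(x,x,1) = (x\vee 1)\wedge_{1}(x\vee 1) = 1$ since $[1) = \{1\}$, and $m(x,y,x) = x\wedge_{x}(y\vee x) = x$ since $x$ is the least element of $[x)$. For $(4)$ I would write $p = x\vee w$, $q = y\vee w$, $r = z\vee w$, all in $[w)$; then the left side equals $p\wedge_{w}(q\vee r)$ because $(y\vee z)\vee w = q\vee r$, while the right side is $m(p\wedge_{w}q,\, p\wedge_{w}q,\, p\wedge_{w}r) = (p\wedge_{w}q)\vee(p\wedge_{w}r)$, so the identity is exactly the distributive law in $[w)$. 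For $(3)$ I would pass to $[z)$ and set $X = x\vee z$, $Y = y\vee z$, $U = u\vee z$; since $m(u,x,z)$ and $m(x,u,z)$ lie in $[z)$ and hence absorb the outer join with $z$, both $m(y,m(u,x,z),z)$ and $m(y,m(x,u,z),z)$ collapse to $M := X\wedge_{z}Y\wedge_{z}U$ by commutativity and associativity of $\wedge_{z}$. With $N := m(x,y,z) = X\wedge_{z}Y$ one has $M\leq N$, so the left side $m(N,M,w) = (N\vee w)\wedge_{w}(M\vee w) = M\vee w$, which matches the right side $m(w,w,M) = M\vee w$.

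The converse direction is where the real work lies. Starting only from the abstract identities I would first recover the semilattice structure of $\vee$ defined by $x\vee y = m(x,x,y)$: idempotency is $m(x,x,x) = x$, a special case of $(2)$, and that $1$ is the greatest element is $(1)$. The delicate points are the commutativity $m(x,x,y) = m(y,y,x)$ and the associativity of $\vee$, which are not transparent from the compressed base and must be extracted by a chain of equational substitutions using $(3)$ and $(4)$; I expect this to be the main obstacle, since the four identities were obtained by machine and the intermediate consequences are unlikely to be short. Once $\langle A,\vee,1\rangle$ is a join-semilattice with top, I would fix $z$, define $a\wedge_{z}b := m(a,b,z)$ on $[z)$, and verify that it is the meet of $a,b$ in $[z)$ and that $\langle [z),\vee,\wedge_{z},z,1\rangle$ is a bounded distributive lattice, again by translating each lattice and distributive axiom into an equational consequence of $(1)$--$(4)$.

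Finally, for the mutual inverse statements I would argue directly. The equality $(\mathbf{A}_{*})^{*} = \mathbf{A}$ is immediate, since the join rebuilt from $m$ is $m(x,x,y) = (x\vee y)\wedge_{y}(x\vee y) = x\vee y$. For $(\mathbf{A}^{*})_{*} = \mathbf{A}$ I would use the description of $\wedge_{z}$ in terms of $m$ established in the converse direction: the rebuilt ternary term is $(x\vee z)\wedge_{z}(y\vee z) = m(x\vee z,\, y\vee z,\, z)$, and I would simplify this back to $m(x,y,z)$ using $(2)$ and $(4)$ together with the absorption of $z$ by elements of $[z)$.
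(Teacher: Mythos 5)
This theorem is quoted from Ara\'{u}jo and Kinyon \cite{ArKin11}; the paper under review offers no proof of its own, so the only question is whether your argument actually establishes the result. Your forward direction is correct and essentially complete: (1) and (2) are immediate, your reduction of (4) to the distributive law in $[w)$ via $m(y,y,z)=y\vee z$ is right, and your collapse of both inner terms in (3) to $M=(x\vee z)\wedge_{z}(y\vee z)\wedge_{z}(u\vee z)$, followed by $m(N,M,w)=M\vee w=m(w,w,M)$ using $M\leq N$, is a clean verification. The identity $({\bf{A}}_{*})^{*}={\bf{A}}$ is also correctly dispatched.

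The genuine gap is the converse direction, which is the entire substance of the theorem and which you explicitly leave as a plan. From the four identities alone you must derive that $x\vee y:=m(x,x,y)$ is commutative and associative, that $1$ is the top, that $m(a,b,z)$ is the meet of $a,b$ in $[z)$, and that each $[z)$ is a bounded distributive lattice; you acknowledge these require ``a chain of equational substitutions'' that you do not produce, and you also defer the derivation $m(x\vee z,y\vee z,z)=m(x,y,z)$ needed for $({\bf{A}}^{*})_{*}={\bf{A}}$. This is not a routine omission: the axiom base (1)--(4) was obtained and verified in \cite{ArKin11} with automated theorem provers precisely because the equational derivations of even commutativity of $m(x,x,y)$ from such a compressed base are long and non-obvious, and there is no reason to expect short hand proofs to exist. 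As it stands, your proposal proves one half of the theorem and records an honest but unexecuted strategy for the other half; to complete it you would either need to exhibit the equational derivations or, as the present paper does, cite \cite{ArKin11} for them.
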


By Theorem \ref{ternary operation} the distributive nearlattices can be presented in two equivalent forms and both are useful to different purposes. Examples of distributive nearlattices are the bounded distributive lattices and semi-boolean algebras \cite{Abb67}.

Let ${\bf{A}}$ be a distributive nearlattice. Recall that we have the following characterization of the filter generated by a subset $X \subseteq A$:
\[
{\rm{Fig}}(X) = \{ a \in A \colon \exists x_{1}, \dots, x_{n} \in [X) (a = x_{1} \wedge \dots \wedge x_{n}) \}.
\]
We consider ${\rm{Fi}}({\bf{A}}) = \langle {\rm{Fi}}(A), \veebar, \cap, \to, \{1\}, A \rangle$, where the least element is $\{1\}$, the greatest element is $A$, and for each $G,H \in {\rm{Fi}}(A)$, we have $G \veebar H = {\rm{Fig}}(G \cup H)$ and $G \to H = \{ x \in A \colon [x) \cap G \subseteq H \}$. Then ${\rm{Fi}}({\bf{A}})$ is a Heyting algebra \cite{CaCe15}. In particular, it is easy to see that the pseudocomplement of a filter $F$ of $A$ is given by $F^{\top} = F \to \{1\} = \{ x \in A \colon \forall f \in F (x \vee f = 1) \}$.

\begin{theorem} \cite{ChaHaKu07} \label{prime_theo}
Let ${\bf{A}}$ be a distributive nearlattice. Let $I \in {\rm{Id}}(A)$ and $F \in {\rm{Fi}}(A)$ be such that $I \cap F = \emptyset$. Then there exists $P \in {\rm{X}}(A)$ such that $I \subseteq P$ and $P \cap F = \emptyset$.
\end{theorem}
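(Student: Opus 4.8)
The plan is to obtain $P$ as a maximal element of an appropriate family of ideals and then to show that maximality forces primeness. First I would consider the family $\mathcal{P} = \{ J \in \mathrm{Id}(A) : I \subseteq J \text{ and } J \cap F = \emptyset \}$, partially ordered by inclusion. It is nonempty since $I \in \mathcal{P}$. Given a chain $\mathcal{C} \subseteq \mathcal{P}$, its union $\bigcup \mathcal{C}$ is again an ideal: it is clearly downward closed, and if $x, y \in \bigcup \mathcal{C}$ then both lie in a common member of the chain, whence $x \vee y$ does too. Moreover $\bigcup \mathcal{C}$ contains $I$ and stays disjoint from $F$, so it is an upper bound in $\mathcal{P}$. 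By Zorn's Lemma there is a maximal element $P \in \mathcal{P}$. Since every filter contains $1$ we have $1 \in F$, hence $1 \notin P$ and $P$ is proper; it contains $I$ and is disjoint from $F$ by construction, so it only remains to prove that $P$ is prime.

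Second, suppose toward a contradiction that $a \wedge b$ exists, $a \wedge b \in P$, but $a \notin P$ and $b \notin P$. Then $\mathrm{Idg}(P \cup \{a\})$ and $\mathrm{Idg}(P \cup \{b\})$ strictly contain $P$ and still contain $I$, so by maximality each meets $F$: there are $f, g \in F$ with $f \in \mathrm{Idg}(P \cup \{a\})$ and $g \in \mathrm{Idg}(P \cup \{b\})$. Using the description of the generated ideal together with the fact that $P$ is closed under finite joins, I can write $f \le p \vee a$ and $g \le q \vee b$ for some $p, q \in P$; indeed $a$ must genuinely appear in the join bounding $f$, since otherwise $f \le p' \in P$ would give $f \in P \cap F = \emptyset$, and likewise for $b$.

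The main obstacle is that meets need not exist globally in a nearlattice, so I cannot directly distribute $(p \vee a) \wedge (q \vee b)$. The device that overcomes this is to localize everything inside a single principal filter, which by definition is a bounded distributive lattice. Put $d = a \wedge b \in P$. Replacing $p, q$ by $p \vee d, q \vee d \in P$ (which, as $d \le a$ and $d \le b$, does not change $p \vee a$ or $q \vee b$) and $f, g$ by $f \vee d, g \vee d$ (which stay in $F$ by upward closure and still satisfy $f \vee d \le p \vee a$ and $g \vee d \le q \vee b$), all of $p, q, a, b, f \vee d, g \vee d$ now lie in the bounded distributive lattice $[d)$. There the relevant meets exist and distributivity applies, so
\[
(f \vee d) \wedge (g \vee d) \le (p \vee a) \wedge (q \vee b) = (p \wedge q) \vee (p \wedge b) \vee (a \wedge q) \vee (a \wedge b).
\]
Each of the four joinands lies in $P$, being below $p$, below $p$, below $q$, and equal to $d$ respectively, so the right-hand side lies in $P$ and hence $(f \vee d) \wedge (g \vee d) \in P$. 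But $f \vee d, g \vee d \in F$ and their meet exists, so $(f \vee d) \wedge (g \vee d) \in F$ as well, contradicting $P \cap F = \emptyset$. Therefore $P$ is prime. I expect the localization to $[d)$ to be the only genuinely delicate point; the Zorn's Lemma scaffolding and the generated-ideal bookkeeping are routine.
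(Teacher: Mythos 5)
The paper does not include a proof of this theorem; it is quoted from \cite{ChaHaKu07}, so there is nothing internal to compare against. Your argument is correct and is the standard separation proof: Zorn's Lemma applied to ideals containing $I$ and disjoint from $F$, with primeness of the maximal element obtained by localizing the distributivity computation inside the principal filter $[a\wedge b)$ --- which is precisely the device needed since meets in a nearlattice are only partial. The one step you leave implicit, that the meet of $f\vee d$ and $g\vee d$ computed in $[d)$ is their meet in $A$ (so that it lies in $F$), is immediate: any common lower bound $w$ satisfies $w\le w\vee d\le (f\vee d)\wedge_{d}(g\vee d)$.
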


\subsection{Annihilators and normal distributive nearlattices}

We introduce the relative annihilators in distributive nearlattices. The following definition was given in \cite{CaCe15} as an alternative from that given in \cite{ChaKo07}.

\begin{definition}
Let ${\bf{A}}$ be a join-semilattice with greatest element and $a,b \in A$. The {\it{annihilator of $a$ relative to $b$}} is the set 
\[
a \circ b = \{ x \in A \colon b \leq x \vee a \}.
\]
In particular, $a^{\top} = a \circ 1 = \{ x \in A \colon x \vee a = 1 \}$ is called the {\it{annihilator of $a$}}.
\end{definition}

Due to the results developed in \cite{CaCe15}, if ${\bf{A}}$ is a distributive nearlattice we have $a \circ b \in {\rm{Fi}}(A)$, for all $a,b \in A$. Let $a \in A$ and consider  
\[
a^{\top \top} = (a^{\top})^{\top} = \{ y \in A \colon \forall x \in a^{\top} (y \vee x = 1)\}. 
\]
It follows $a^{\top\top} \in {\rm{Fi}}(A)$. We say that an element $a \in A$ is {\it{dense}} if $a^{\top} = \{1\}$. Denote by ${\rm{D}}(A)$ the set of all dense elements of $A$. Also, we say that a filter $F$ of $A$ is {\it{dense}} if it is dense in the Heyting algebra ${\rm{Fi}}(A)$, i.e., $F^{\top} = \{1\}$. 

\begin{lemma} \cite{Ca19,CaGo21} \label{prop_anni}
Let ${\bf{A}}$ be a distributive nearlattice. Let $a,b \in A$ and $I \in {\rm{Id}}(A)$. Then:
\begin{enumerate}
\item $[a) \subseteq a^{\top \top}$.
\item $a^{\top \top \top} = a^{\top}$.
\item If $a \leq b$, then $a^{\top} \subseteq b^{\top}$.
\item $a^{\top} \subseteq b^{\top}$ if and only if $b^{\top \top} \subseteq a^{\top \top}$.
\item $(a \wedge b)^{\top} = a^{\top} \cap b^{\top}$, whenever $a \wedge b$ exists.
\item $(a \vee b)^{\top \top} = a^{\top \top} \cap b^{\top \top}$.
\item $I \cap a^{\top} = \emptyset$ if and only if there exists $U \in {\rm{X}}_{m}(A)$ such that $I \subseteq U$ and $a \in U$.
\item If $U \in {\rm{Id}}(A)$, then $U \in {\rm{X}}_{m}(A)$ if and only if $\forall a \in A (a \notin U \Leftrightarrow U \cap a^{\top} \neq \emptyset)$.
\item If $U \in {\rm{X}}_{m}(A)$, then $\forall a \in A (a \notin U \Leftrightarrow U \cap a^{\top \top} = \emptyset)$.
\end{enumerate}
\end{lemma}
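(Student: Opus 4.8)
The plan is to treat parts (1)--(6) through the Heyting-algebra structure of $\mathrm{Fi}(\mathbf{A})$, after the key observation that $a^{\top} = [a)^{\top}$; that is, the annihilator of $a$ is exactly the pseudocomplement of the principal filter $[a)$ in $\mathrm{Fi}(\mathbf{A})$, since $x \vee a = 1$ forces $x \vee f = 1$ for every $f \ge a$ by $x \vee a \le x \vee f$. With this identification, $a^{\top\top}$ and $a^{\top\top\top}$ become $[a)^{\top\top}$ and $[a)^{\top\top\top}$. I would then dispatch (1) and (3) by monotonicity of $\vee$: if $a \le y$ and $x \vee a = 1$ then $x \vee y = 1$, which gives $[a) \subseteq a^{\top\top}$ and, for $a \le b$, $a^{\top} \subseteq b^{\top}$. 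For (2) and (4) I would only invoke the two formal properties of the pseudocomplement on $\mathrm{Fi}(\mathbf{A})$: that $(-)^{\top}$ is antitone and that $F \subseteq F^{\top\top}$. Then $a^{\top} \subseteq a^{\top\top\top}$ is $F \subseteq F^{\top\top}$ at $F = a^{\top}$, while $a^{\top\top\top} \subseteq a^{\top}$ is antitonicity applied to (1); part (4) follows by applying antitonicity to each implication and invoking (2).

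Part (5) is the one genuinely nearlattice-theoretic computation. The inclusion $(a \wedge b)^{\top} \subseteq a^{\top} \cap b^{\top}$ is immediate from (3) since $a \wedge b \le a, b$. For the reverse inclusion I would fix $x$ with $x \vee a = 1 = x \vee b$ and use the distributive identity $x \vee (a \wedge b) = (x \vee a) \wedge (x \vee b)$, valid in the bounded distributive lattice $[x)$ whenever $a \wedge b$ exists, to get $x \vee (a \wedge b) = 1 \wedge 1 = 1$. For (6), the inclusion $(a \vee b)^{\top\top} \subseteq a^{\top\top} \cap b^{\top\top}$ follows from (3) and (4) applied to $a, b \le a \vee b$. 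The reverse inclusion I would argue directly: given $y \in a^{\top\top} \cap b^{\top\top}$ and $x \in (a \vee b)^{\top}$, the relation $(x \vee a) \vee b = 1$ shows $x \vee a \in b^{\top}$, so $y \vee x \vee a = 1$ because $y \in b^{\top\top}$, whence $y \vee x \in a^{\top}$, and finally $y \in a^{\top\top}$ gives $y \vee (y \vee x) = y \vee x = 1$.

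For the ideal-theoretic parts I would rely on the description $\mathrm{Idg}(I \cup \{a\}) = \{ y \in A \colon \exists\, i \in I\ (y \le i \vee a) \}$ together with the fact that a nonempty ideal is proper exactly when it omits $1$. In (7) the backward direction is immediate: an $x \in I \cap a^{\top}$ would satisfy $x, a \in U$ and $x \vee a = 1$, forcing $1 \in U$. For the forward direction, $I \cap a^{\top} = \emptyset$ says that no $i \in I$ satisfies $i \vee a = 1$, so $1 \notin \mathrm{Idg}(I \cup \{a\})$; this ideal is therefore proper, and a Zorn's lemma argument extends it to some $U \in \mathrm{X}_m(A)$ with $I \subseteq U$ and $a \in U$. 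The same circle of ideas proves (8): when $U$ is maximal, $a \notin U$ makes $\mathrm{Idg}(U \cup \{a\}) = A$, yielding $u \in U$ with $u \vee a = 1$, i.e. $u \in U \cap a^{\top}$, while $U \cap a^{\top} \ne \emptyset$ prevents $a \in U$ by properness; conversely, the stated equivalence forces $U$ nonempty and, taking $a = 1$ where $1^{\top} = A$, proper, and makes any ideal strictly above $U$ contain some $x \vee a = 1$, hence equal $A$. Finally (9) is short: for the forward direction $a \notin U$ gives $x \in U \cap a^{\top}$ by (8), and then $y \in U \cap a^{\top\top}$ would give $1 = y \vee x \in U$; the backward direction uses $a \in [a) \subseteq a^{\top\top}$ from (1).

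I expect the main obstacles to be the two places where the semilattice structure genuinely intervenes: verifying the distributive identity $x \vee (a \wedge b) = (x \vee a) \wedge (x \vee b)$ underlying the reverse inclusion of (5), and the Zorn's lemma extension to a maximal---rather than merely prime---ideal in (7), where Theorem \ref{prime_theo} is not quite enough. The remaining parts reduce either to formal manipulation of the pseudocomplement $(-)^{\top}$ on the Heyting algebra $\mathrm{Fi}(\mathbf{A})$ or to routine bookkeeping with generated ideals.
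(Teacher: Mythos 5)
The paper does not prove this lemma --- it is imported verbatim from \cite{Ca19,CaGo21} --- so there is no internal proof to compare against; judged on its own, your argument is essentially correct and follows the standard route of those references: identify $a^{\top}$ with the pseudocomplement $[a)^{\top}$ in the Heyting algebra ${\rm{Fi}}({\bf{A}})$, get (1)--(4) from formal pseudocomplement identities, do the two genuinely order-theoretic computations in (5) and (6), and handle (7)--(9) via generated ideals and Zorn's lemma (correctly noting that Theorem \ref{prime_theo} only yields a prime, not maximal, ideal). The one slip is in (5): the identity $x \vee (a \wedge b) = (x \vee a) \wedge (x \vee b)$ does not live in $[x)$, since $a$, $b$ and $a \wedge b$ need not belong to $[x)$, and the meet of $x\vee a$ and $x\vee b$ computed inside $[x)$ is not a priori the element you want. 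The correct ambient is the principal filter $[a \wedge b)$: there $a \wedge b$ is the bottom, $a \wedge_{[a\wedge b)} b = a \wedge b$, and distributivity of $[a\wedge b)$ applied to $u = x \vee (a\wedge b)$ gives $u = u \vee (a\wedge b) = (u \vee a) \wedge (u \vee b) = (x\vee a)\wedge(x\vee b) = 1$ when $x \in a^{\top}\cap b^{\top}$, which is all the reverse inclusion needs. With that one-line repair the proof is complete; the only other caveat is the implicit (and standard) assumption that the ideal $I$ in (7) is non-empty, without which the forward direction fails for $a = 1$.
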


We are interested in a particular class of distributive nearlattices presented in \cite{CaCe15}, which are a generalization of normal lattices given by Cornish in \cite{Cor72}.

\begin{definition} \label{def_normal}
Let ${\bf{A}}$ be a distributive nearlattice. We say that ${\bf{A}}$ is {\it{normal}} if each prime ideal is contained in a unique maximal ideal.
\end{definition}

\begin{theorem} \cite{CaCe15} \label{normal}
Let ${\bf{A}}$ be a distributive nearlattice. Then the following conditions are equivalent:
\begin{enumerate}
\item ${\bf{A}}$ is normal.
\item $(a \vee b)^{\top} = a^{\top} \veebar b^{\top}$, for all $a,b \in A$.
\item If $P \in {\rm{X}}(A)$ and $a,b \in A$ such that $a \vee b =1$, then $P \cap a^{\top} \neq \emptyset$ or $P \cap b^{\top} \neq \emptyset$.
\end{enumerate}
\end{theorem}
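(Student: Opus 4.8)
The plan is to prove the three conditions equivalent by establishing the cycle $(1)\Rightarrow(2)\Rightarrow(3)\Rightarrow(1)$. Before starting I would record the inclusion that holds unconditionally: if $x\in a^{\top}$ then $x\vee a=1$, hence $x\vee(a\vee b)=1$, so $a^{\top}\subseteq(a\vee b)^{\top}$ and likewise $b^{\top}\subseteq(a\vee b)^{\top}$; since $(a\vee b)^{\top}$ is a filter it contains $a^{\top}\veebar b^{\top}$. Thus in $(1)\Rightarrow(2)$ only the reverse inclusion $(a\vee b)^{\top}\subseteq a^{\top}\veebar b^{\top}$ requires work, and this is where I expect the real content to lie.

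For $(1)\Rightarrow(2)$ I would argue by contradiction using prime separation. Take $c\in(a\vee b)^{\top}$, so $c\vee a\vee b=1$, and suppose $c\notin a^{\top}\veebar b^{\top}$. Since a filter is up-closed, $c\notin G:=a^{\top}\veebar b^{\top}$ forces $(c]\cap G=\emptyset$, so by Theorem \ref{prime_theo} there is a prime ideal $P$ with $c\in P$ and $P\cap G=\emptyset$; in particular $P\cap a^{\top}=\emptyset$ and $P\cap b^{\top}=\emptyset$. By Lemma \ref{prop_anni}(7) these give maximal ideals $U_{1},U_{2}$ with $P\subseteq U_{i}$, $a\in U_{1}$ and $b\in U_{2}$. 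Normality forces $U_{1}=U_{2}=:U$, and then $a,b,c\in U$ yields $1=a\vee b\vee c\in U$, contradicting that $U$ is proper. Hence $c\in a^{\top}\veebar b^{\top}$.

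For $(2)\Rightarrow(3)$ I would use that the set-complement of a prime ideal is a filter. Let $P$ be prime and $a\vee b=1$; then $(a\vee b)^{\top}=1^{\top}=A$, so by (2) $a^{\top}\veebar b^{\top}=A$. If both $P\cap a^{\top}=\emptyset$ and $P\cap b^{\top}=\emptyset$, then $a^{\top}\cup b^{\top}\subseteq A\setminus P$; since $A\setminus P$ is a filter it must contain $a^{\top}\veebar b^{\top}=A$, forcing $P=\emptyset$, which is impossible for a non-empty prime ideal. For $(3)\Rightarrow(1)$ let $P$ be prime and suppose $U_{1},U_{2}$ are maximal ideals containing $P$ with $U_{1}\neq U_{2}$, say $a\in U_{1}\setminus U_{2}$. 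Since $a\notin U_{2}$, Lemma \ref{prop_anni}(8) gives $x\in U_{2}$ with $x\vee a=1$, and applying (3) to $P$ and the pair $a,x$ yields $P\cap a^{\top}\neq\emptyset$ or $P\cap x^{\top}\neq\emptyset$. In the first case a witness $y\in P\subseteq U_{1}$ with $y\vee a=1$ gives $1\in U_{1}$; in the second a witness $z\in P\subseteq U_{2}$ with $z\vee x=1$ gives $1\in U_{2}$. Either way a maximal ideal becomes improper, a contradiction, so $U_{1}=U_{2}$.

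The main obstacle is the reverse inclusion in $(1)\Rightarrow(2)$: unlike the trivial inclusion it cannot be checked elementwise and must be obtained by a separation argument, where the delicate points are verifying $(c]\cap G=\emptyset$ so that Theorem \ref{prime_theo} applies, and then converting the two emptiness conditions into maximal ideals via Lemma \ref{prop_anni}(7) so that normality can collapse them to a single $U$. The other two implications are comparatively routine once one notices that the complement of a prime ideal is a filter and that Lemma \ref{prop_anni}(8) supplies the annihilator witnesses needed to apply (3).
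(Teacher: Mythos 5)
The paper states Theorem \ref{normal} as a cited result from \cite{CaCe15} and gives no proof of its own, so there is nothing internal to compare against; judged on its own terms, your argument is correct. The cycle $(1)\Rightarrow(2)\Rightarrow(3)\Rightarrow(1)$ works: the separation step in $(1)\Rightarrow(2)$ correctly reduces $c\notin a^{\top}\veebar b^{\top}$ to $(c]\cap(a^{\top}\veebar b^{\top})=\emptyset$ and then uses Lemma \ref{prop_anni}(7) twice before collapsing the two maximal ideals by normality; the observation that $A\setminus P$ is a filter is exactly what makes $(2)\Rightarrow(3)$ go through; and $(3)\Rightarrow(1)$ correctly extracts the witness $x\in U_{2}\cap a^{\top}$ from Lemma \ref{prop_anni}(8) and derives $1\in U_{1}$ or $1\in U_{2}$. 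This is the standard Cornish-style argument for normality transported to nearlattices via the annihilator machinery the paper already provides. One small point worth making explicit: Definition \ref{def_normal} asks that each prime ideal be contained in a \emph{unique} maximal ideal, and your $(3)\Rightarrow(1)$ only establishes uniqueness (at most one). Existence is automatic --- an ideal is proper iff it omits $1$, the union of a chain of such ideals again omits $1$, so Zorn's lemma extends any prime ideal to a maximal one --- but a sentence saying so would close the argument completely.
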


Let ${\bf{A}}$ be a normal distributive nearlattice and ${\rm{R}}(A) = \{ a^{\top} \colon a \in A \}$. We define $\overline{m} \colon {\rm{R}}(A)^{3} \to {\rm{R}}(A)$ by 
\[
\overline{m} (a^{\top}, b^{\top}, c^{\top}) = (a^{\top} \veebar c^{\top}) \cap ( b^{\top} \veebar c^{\top}).
\]
By Lemma \ref{prop_anni} and Theorem \ref{normal} we have that the structure ${\rm{R}}({\bf{A}}) = \langle {\rm{R}}(A), \overline{m}, A \rangle$ is a distributive nearlattice (for more details see \cite{Ca19,CaGo21}).

\subsection{$\alpha$-filters, $\alpha$-ideals and congruences}

Now we consider some particular classes of filters and ideals studied in \cite{Ca19,CaGo21}, and also recall the concept of congruence in distributive nearlattices.

\begin{definition} \cite{Ca19,CaGo21}
Let ${\bf{A}}$ be a distributive nearlattice. Then:
\begin{itemize}
\item We say that a filter $F$ of $A$ is an $\alpha$-filter if $a^{\top \top} \subseteq F$, for all $a \in F$.
\item We say that an ideal $I$ of $A$ is an $\alpha$-ideal if for each $a \in A$, $I \cap a^{\top \top} \neq \emptyset$ implies $a \in I$.
\end{itemize}
\end{definition}

Let ${\bf{A}}$ be a distributive nearlattice. Denote by ${\rm{Fi}}_{\alpha}(A)$ and ${\rm{Id}}_{\alpha}(A)$ the set of all $\alpha$-filters and $\alpha$-ideals of $A$, respectively. Denote by ${\rm{X}}_{\alpha}(A)$ the set of all prime $\alpha$-ideals of $A$. It is easy to see that for each $a \in A$, $a^{\top}$ and $a^{\top \top}$ are $\alpha$-filters of $A$. On the other hand, ${\rm{D}}(A)$ is an example of $\alpha$-ideal of $A$. Moreover, ${\rm{D}}(A)$ is the smallest $\alpha$-ideal of $A$. Note that by Lemma \ref{prop_anni} every maximal ideal is in particular an $\alpha$-ideal.

\begin{proposition} \cite{CaGo21} \label{theocaract1}
Let ${\bf{A}}$ be a normal distributive nearlattice and $I \in {\rm{Id}}(A)$. Then 
\[
{\rm{Idg}}_{\alpha}(I) = \{ x \in A \colon \exists i \in I (x^{\top} \subseteq i^{\top}) \}
\]
is the smallest $\alpha$-ideal containing $I$. In particular, for each $a \in A$ we have ${\rm{Idg}}_{\alpha}((a]) = (a]_{\alpha} = \{ x \in A \colon x^{\top} \subseteq a^{\top} \}$.
\end{proposition}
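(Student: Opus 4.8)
Write $J := \{ x \in A \colon \exists i \in I (x^{\top} \subseteq i^{\top}) \}$. The plan is to verify in turn that $I \subseteq J$, that $J$ is an ideal, that $J$ is an $\alpha$-ideal, and finally that $J$ is contained in every $\alpha$-ideal that contains $I$; together these say exactly that $J$ is the smallest $\alpha$-ideal containing $I$. The whole argument rests on one elementary translation: unwinding the description $a^{\top \top} = \{ y \in A \colon \forall x \in a^{\top}(y \vee x = 1) \}$ shows that, for any $a, y \in A$, one has $y \in a^{\top \top}$ if and only if $a^{\top} \subseteq y^{\top}$. This identity lets me pass freely between the membership condition defining an $\alpha$-ideal and the annihilator-inclusion condition defining $J$.

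The inclusion $I \subseteq J$ is immediate, taking $i = x$. For $J$ to be an ideal I first check downward closure: if $x \in J$ witnessed by $i \in I$ and $a \leq x$, then $a^{\top} \subseteq x^{\top} \subseteq i^{\top}$ by Lemma \ref{prop_anni}(3), so $a \in J$. The step that genuinely uses the hypothesis, and which I expect to be the crux, is closure under joins: given $a, b \in J$ with $a^{\top} \subseteq i^{\top}$ and $b^{\top} \subseteq j^{\top}$ for $i, j \in I$, I want a single element of $I$ whose annihilator dominates $(a \vee b)^{\top}$. Here normality is essential: by Theorem \ref{normal}(2) we have $(a \vee b)^{\top} = a^{\top} \veebar b^{\top} \subseteq i^{\top} \veebar j^{\top} = (i \vee j)^{\top}$, and $i \vee j \in I$ since $I$ is an ideal, so $a \vee b \in J$. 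Without normality only the inclusion $a^{\top} \veebar b^{\top} \subseteq (a \vee b)^{\top}$ is available, and the argument breaks down.

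That $J$ is an $\alpha$-ideal follows from the translation above: if $J \cap a^{\top \top} \neq \emptyset$, pick $y$ in the intersection; then $a^{\top} \subseteq y^{\top}$ because $y \in a^{\top \top}$, while $y^{\top} \subseteq i^{\top}$ for some $i \in I$ because $y \in J$, so $a^{\top} \subseteq i^{\top}$ and $a \in J$. For minimality, let $K$ be any $\alpha$-ideal with $I \subseteq K$ and take $x \in J$ with $x^{\top} \subseteq i^{\top}$ for some $i \in I$; rewriting $x^{\top} \subseteq i^{\top}$ as $i \in x^{\top \top}$, we get $i \in K \cap x^{\top \top}$, whence $x \in K$ because $K$ is an $\alpha$-ideal. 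Thus $J \subseteq K$, completing the proof that $J = {\rm{Idg}}_{\alpha}(I)$.

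Finally, the displayed formula for ${\rm{Idg}}_{\alpha}((a])$ is read off by specialising $I = (a]$: if $i \leq a$ then $i^{\top} \subseteq a^{\top}$ by Lemma \ref{prop_anni}(3), so the condition $\exists i \in (a] (x^{\top} \subseteq i^{\top})$ collapses to $x^{\top} \subseteq a^{\top}$, the reverse direction being trivial by choosing $i = a$.
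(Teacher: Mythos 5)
Your proof is correct. Note, however, that the paper itself offers no proof of this proposition: it is quoted from \cite{CaGo21}, so there is no in-paper argument to compare yours against. On its own terms your argument is sound and well organized: the translation $y \in a^{\top\top} \Leftrightarrow a^{\top} \subseteq y^{\top}$ is exactly the right pivot, you correctly identify closure under joins as the only place normality enters (via $(a \vee b)^{\top} = a^{\top} \veebar b^{\top}$ and $i^{\top} \veebar j^{\top} = (i \vee j)^{\top}$ from Theorem \ref{normal}, together with monotonicity of $\veebar$), and the $\alpha$-ideal property, the minimality argument, and the specialisation to $I = (a]$ all check out.
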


\begin{proposition} \cite{Ca19} \label{theocaract1}
Let ${\bf{A}}$ be a distributive nearlattice and $F \in {\rm{Fi}}(A)$. Then the following conditions are equivalent:
\begin{enumerate}
\item $F$ is an $\alpha$-filter.
\item If $a^{\top}=b^{\top}$ and $a\in F$, then $b\in F$.
\item $F= \bigcup \{ a^{\top \top} \colon a \in F \}$.
\end{enumerate} 
Moreover, if ${\bf{A}}$ is normal, then 
\[
\alpha(F) = \{ x \in A \colon \exists a \in F (a^{\top} \subseteq x^{\top}) \}
\]
is the smallest $\alpha$-filter containing $F$.
\end{proposition}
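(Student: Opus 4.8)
The plan is to base every step on one reformulation: for all $x,a\in A$ one has $x\in a^{\top\top}$ if and only if $a^{\top}\subseteq x^{\top}$, since $x\in a^{\top\top}$ means $x\vee f=1$ for every $f\in a^{\top}$, i.e.\ every element killing $a$ also kills $x$. Combined with Lemma~\ref{prop_anni}(1),(3),(4),(5) this turns all assertions about $(\cdot)^{\top\top}$ into manageable inclusions of annihilator filters.

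For the equivalences I would run the cycle $(1)\Rightarrow(2)\Rightarrow(3)\Rightarrow(1)$. The step $(1)\Rightarrow(2)$ is immediate: from $a^{\top}=b^{\top}$ we get $a^{\top\top}=b^{\top\top}$, and then $b\in[b)\subseteq b^{\top\top}=a^{\top\top}\subseteq F$ by Lemma~\ref{prop_anni}(1) and the $\alpha$-filter hypothesis. The step $(3)\Rightarrow(1)$ is equally quick, since $a\in F$ forces $a^{\top\top}\subseteq\bigcup\{b^{\top\top}\colon b\in F\}=F$. The substance is in $(2)\Rightarrow(3)$: the inclusion $F\subseteq\bigcup\{a^{\top\top}\colon a\in F\}$ holds because $a\in a^{\top\top}$, and for the reverse inclusion I take $x\in a^{\top\top}$ with $a\in F$, so $a^{\top}\subseteq x^{\top}$. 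The key computation is $(a\vee x)^{\top}=x^{\top}$: one inclusion is Lemma~\ref{prop_anni}(3) applied to $x\leq a\vee x$, and for the other, if $a\vee x\vee z=1$ then $x\vee z\in a^{\top}\subseteq x^{\top}$, whence $x\vee(x\vee z)=1$, i.e.\ $x\vee z=1$. Since $a\vee x\in F$ and $(a\vee x)^{\top}=x^{\top}$, hypothesis $(2)$ gives $x\in F$.

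For the ``moreover'' part I would check in turn that $\alpha(F)$ is a filter, an $\alpha$-filter, contains $F$, and is least among $\alpha$-filters containing $F$. Containment $F\subseteq\alpha(F)$ is trivial (take $a=x$). The $\alpha$-filter property reduces to transitivity of $\subseteq$: if $x\in\alpha(F)$ with witness $a^{\top}\subseteq x^{\top}$ and $w\in x^{\top\top}$, i.e.\ $x^{\top}\subseteq w^{\top}$, then $a^{\top}\subseteq w^{\top}$, so $w\in\alpha(F)$. Minimality uses Lemma~\ref{prop_anni}(1),(4): if $G$ is an $\alpha$-filter with $F\subseteq G$ and $x\in\alpha(F)$ with witness $a\in F\subseteq G$, then $a^{\top}\subseteq x^{\top}$ gives $x^{\top\top}\subseteq a^{\top\top}\subseteq G$, and $x\in[x)\subseteq x^{\top\top}$ yields $x\in G$.

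The delicate point, which I expect to be the main obstacle, is closure of $\alpha(F)$ under existing meets, because in a nearlattice binary meets need not exist. Given $x,y\in\alpha(F)$ whose meet $m=x\wedge y$ exists, with witnesses $a,b\in F$, I would pass to $a\vee m$ and $b\vee m$: both lie in the principal filter $[m)$, which is a bounded distributive lattice, so their meet $c=(a\vee m)\wedge(b\vee m)$ exists there and, being bounded below by $m$, coincides with the meet in $A$; as $a\vee m,\,b\vee m\in F$, also $c\in F$. Then Lemma~\ref{prop_anni}(5) gives $c^{\top}=(a\vee m)^{\top}\cap(b\vee m)^{\top}$, and the same one-line computation as above (now using $a^{\top}\subseteq x^{\top}$ together with $m\leq x$, hence $x\vee m=x$) shows $(a\vee m)^{\top}\subseteq x^{\top}$ and symmetrically $(b\vee m)^{\top}\subseteq y^{\top}$, so $c^{\top}\subseteq x^{\top}\cap y^{\top}=m^{\top}$, again by Lemma~\ref{prop_anni}(5); thus $m\in\alpha(F)$. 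Throughout, Lemma~\ref{prop_anni}(5) already supplies $(p\wedge q)^{\top}=p^{\top}\cap q^{\top}$, so the annihilator bookkeeping closes elementarily; normality (Theorem~\ref{normal}) I keep in reserve for the interaction of joins and annihilators, to be invoked only if a cleaner route to the meet step is wanted.
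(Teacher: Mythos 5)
The paper itself gives no proof of this proposition---it is imported verbatim from \cite{Ca19}---so there is no in-paper argument to compare against; I can only assess your proposal on its own terms, and it checks out. The pivot $x\in a^{\top\top}\Leftrightarrow a^{\top}\subseteq x^{\top}$ is exactly the right reformulation, the cycle $(1)\Rightarrow(2)\Rightarrow(3)\Rightarrow(1)$ is sound, and the key computation $(a\vee x)^{\top}=x^{\top}$ for $x\in a^{\top\top}$, $a\in F$ is correct (the nontrivial inclusion via $x\vee z\in a^{\top}\subseteq x^{\top}$ works). In the ``moreover'' part, your handling of the meet step is the genuinely delicate point and you resolve it correctly: the meet of $a\vee m$ and $b\vee m$ computed in $[m)$ is indeed their infimum in $A$ (any lower bound $d$ satisfies $d\leq d\vee m\leq c$), so $c\in F$, and the inclusions $(a\vee m)^{\top}\subseteq x^{\top}$, $(b\vee m)^{\top}\subseteq y^{\top}$ together with Lemma~\ref{prop_anni}(5) give $c^{\top}\subseteq m^{\top}$. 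One observation worth making explicit: as you note in passing, your argument never actually invokes normality, so you have proved the ``moreover'' clause for arbitrary distributive nearlattices, a formally stronger statement than the one quoted; since normality is only an extra hypothesis, this still proves the proposition as stated, but you should flag the discrepancy rather than leave Theorem~\ref{normal} ``in reserve,'' and double-check that nothing in your meet-closure step silently presupposes it (it does not, as far as I can verify). The only cosmetic gap is that you do not spell out that $\alpha(F)$ is upward closed and contains $1$, both of which are immediate from Lemma~\ref{prop_anni}(3).
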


Then ${\rm{Fi}}_{\alpha}({\bf{A}})= \langle {\rm{Fi}}_{\alpha}(A), \underline{\sqcup}, \cap, \Rightarrow, \{1\}, A \rangle$ is a Heyting algebra, where for each $F,G \in {\rm{Fi}}_{\alpha}(A)$, we have $F {\underline{\sqcup}} G = \alpha(F {\veebar} G)$ and $F \Rightarrow G = \alpha(F \to G)$ \cite{Ca19}.

\begin{theorem} \cite{CaGo21} \label{prime alpha_theo}
Let ${\bf{A}}$ be a distributive nearlattice. Let $I \in {\rm{Id}}(A)$ and $F \in {\rm{Fi}}_{\alpha}(A)$ be such that $I \cap F = \emptyset$. Then there exists $P \in {\rm{X}}_{\alpha}(A)$ such that $I \subseteq P$ and $P \cap F = \emptyset$.
\end{theorem}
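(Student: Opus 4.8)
The plan is to separate $I$ from $F$ by a prime ideal and then upgrade that ideal to an $\alpha$-ideal, exploiting that $F$ is an $\alpha$-filter. First I would consider the family $\mathcal{P}$ of all ideals $J$ with $I \subseteq J$ and $J \cap F = \emptyset$. This family is nonempty (it contains $I$) and is closed under unions of chains, so Zorn's Lemma yields a maximal element $P$. By the standard argument underlying Theorem \ref{prime_theo} — a maximal ideal disjoint from a filter in a distributive nearlattice is prime — the ideal $P$ is prime and proper. It then remains only to prove that $P$ is an $\alpha$-ideal; once this is done, $P \in \mathrm{X}_{\alpha}(A)$ and, by construction, $I \subseteq P$ and $P \cap F = \emptyset$, which finishes the proof.

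To show $P$ is an $\alpha$-ideal I would argue by contradiction: suppose there is $a \in A$ with $P \cap a^{\top \top} \neq \emptyset$ but $a \notin P$. Since $P$ is maximal in $\mathcal{P}$ and $a \notin P$, the ideal $\mathrm{Idg}(P \cup \{a\})$ must meet $F$; using the description of the generated ideal together with the fact that $P$ is closed under joins, this produces $f \in F$ and $p \in P$ with $f \leq p \vee a$. Choosing a witness $c \in P \cap a^{\top \top}$ and setting $q = c \vee p \in P$, one has $q \geq p$, hence $f \leq q \vee a$, and $q \geq c$, hence $q \in a^{\top \top}$ because $a^{\top \top}$ is an upward closed filter.

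The heart of the matter is then a short chain of annihilator inclusions. Since $a^{\top \top}$ is an $\alpha$-filter and $q \in a^{\top \top}$, one gets $q^{\top \top} \subseteq a^{\top \top}$, whence $(q \vee a)^{\top \top} = q^{\top \top} \cap a^{\top \top} = q^{\top \top}$ by Lemma \ref{prop_anni}(6). On the other hand, $f \leq q \vee a$ gives, via Lemma \ref{prop_anni}(3) and (4), the inclusion $(q \vee a)^{\top \top} \subseteq f^{\top \top}$. Combining these with $[q) \subseteq q^{\top \top}$ from Lemma \ref{prop_anni}(1) yields $q \in q^{\top \top} \subseteq f^{\top \top}$. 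Finally, since $F$ is an $\alpha$-filter and $f \in F$, we have $f^{\top \top} \subseteq F$, so $q \in F$; but $q \in P$, contradicting $P \cap F = \emptyset$. Hence $a \in P$, and $P$ is an $\alpha$-ideal.

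I expect the main obstacle to be locating precisely the element $q$ that is simultaneously in $P$, above $p$, and inside $a^{\top \top}$ — the join $q = c \vee p$ is exactly what reconciles these three requirements — and then threading the annihilator identities in the correct direction, especially applying the equivalence in Lemma \ref{prop_anni}(4) to turn $f \leq q \vee a$ into $(q \vee a)^{\top \top} \subseteq f^{\top \top}$. It is worth noting that this route needs no normality hypothesis, matching the generality of the statement; normality would only enter if one instead tried to build $P$ directly inside the lattice of $\alpha$-ideals using an explicit description of the $\alpha$-ideal generated by $I$, which is available only in the normal case.
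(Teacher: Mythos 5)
The paper does not prove this theorem; it is imported from \cite{CaGo21}, so there is no internal proof to compare against. Your argument is correct and is the expected one: Zorn's Lemma applied to the ideals containing $I$ and disjoint from $F$, the standard primality of a maximal such ideal (the same argument that underlies Theorem \ref{prime_theo}), and then the $\alpha$-ideal property extracted from maximality. The key chain $q \in q^{\top\top} = (q\vee a)^{\top\top} \subseteq f^{\top\top} \subseteq F$ checks out against Lemma \ref{prop_anni}(1),(3),(4),(6) together with the fact that $a^{\top\top}$ and $F$ are $\alpha$-filters, and you are right that no normality hypothesis is needed.
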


A {\it{congruence}} of a distributive nearlattice ${\bf{A}}$ is an equivalence relation $\theta \subseteq A \times A$ such that: $(i)$ if $(a,b), (c,d) \in \theta$, then $(a \vee c, b \vee d) \in \theta$, and $(ii)$ if $(a,b), (c,d) \in \theta$ and $a \wedge c$, $b \wedge d$ exist, then $(a \wedge c, b \wedge d) \in \theta$ \cite{ChaHaKu07}. Then an equivalence relation $\theta$ is a congruence of ${\bf{A}}$ if and only if $\theta$ is compatible with the ternary operation $m$. If ${\bf{A}}$ is normal, then $\Theta^{\top} \subseteq A \times A$ defined by $(a,b) \in \Theta^{\top}$ if and only if $a^{\top} = b^{\top}$ is a congruence of ${\bf{A}}$ such that ${\bf{A}}/ \Theta^{\top}$ is isomorphic to ${\rm{R}}({\bf{A}})$ via the map $\rho \colon A \to {\rm{R}}(A)$ given by $\rho(x) = x^{\top}$. So, $m(x,y,z)^{\top} = \overline{m} (a^{\top}, b^{\top}, c^{\top})$ and $\rho$ is an onto homomorphism between the distributive nearlattices ${\bf{A}}$ and {\rm{R}}({\bf{A}}) such that $\Theta^{\top} = {\rm{Ker}}(\rho)$ \cite{Ca19,CaGo21}. The next result gives a necessary and sufficient conditions via $\alpha$-filters so that $\rho$ is 1-1. First we recall that a filter $F$ of $A$ is {\it{prime}} if for every $a,b \in A$, $a \vee b \in F$ implies $a \in F$ or $b \in F$.

\begin{theorem} \label{theo_ro}
Let ${\bf{A}}$ be a distributive nearlattice. Then the following conditions are equivalent:
\begin{enumerate}
\item $\rho$ is 1-1.
\item Every filter is an $\alpha$-filter.
\item Every prime filter is an $\alpha$-filter.
\end{enumerate}
\end{theorem}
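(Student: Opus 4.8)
The plan is to establish the implications $(1) \Rightarrow (2) \Rightarrow (3) \Rightarrow (1)$. The implication $(2) \Rightarrow (3)$ is immediate, since every prime filter is in particular a filter. The two substantive implications are $(1) \Rightarrow (2)$ and $(3) \Rightarrow (1)$, and in both the decisive tool is the characterization of $\alpha$-filters recorded in Proposition \ref{theocaract1}, namely that a filter $F$ is an $\alpha$-filter if and only if $a^{\top} = b^{\top}$ and $a \in F$ together imply $b \in F$.

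For $(1) \Rightarrow (2)$, I would argue directly. Let $F$ be an arbitrary filter and suppose $a^{\top} = b^{\top}$ with $a \in F$. Since $\rho(x) = x^{\top}$ and $\rho$ is injective, the equality $a^{\top} = b^{\top}$ forces $a = b$, whence $b = a \in F$. By the characterization above, $F$ is an $\alpha$-filter; as $F$ was arbitrary, every filter is an $\alpha$-filter.

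The implication $(3) \Rightarrow (1)$ is where the real work lies, and I would prove it by contraposition. Assume $\rho$ is not injective, so there exist $a, b \in A$ with $a^{\top} = b^{\top}$ but $a \neq b$. Since the hypothesis $a^{\top} = b^{\top}$ is symmetric in $a$ and $b$, I may assume $a \not\leq b$. Then the principal filter $[a)$ and the principal ideal $(b]$ are disjoint, since any $x$ with $a \leq x \leq b$ would give $a \leq b$. Applying the separation Theorem \ref{prime_theo} to $(b]$ and $[a)$ yields a prime ideal $P$ with $(b] \subseteq P$ and $P \cap [a) = \emptyset$. The key step is then to check that the complement $F = A \setminus P$ is a \emph{prime filter}: it is upward closed and closed under existing meets because $P$ is a down-set and is prime, it contains $1$ because $P$ is proper, and it is prime because $P$ is closed under joins. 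By construction $a \in F$ (as $a \in [a)$ lies outside $P$) and $b \notin F$ (as $b \in (b] \subseteq P$).

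Finally, I would invoke hypothesis $(3)$: the prime filter $F$ is an $\alpha$-filter, so from $a^{\top} = b^{\top}$ and $a \in F$ the characterization gives $b \in F$, contradicting $b \notin F$. Hence no such pair $a \neq b$ exists and $\rho$ is injective. The main obstacle throughout is the construction in $(3) \Rightarrow (1)$: producing a prime filter that separates $a$ from $b$ while retaining $a^{\top} = b^{\top}$, which is exactly what forces the use of the prime-ideal separation theorem together with the verification that the complement of a prime ideal is a prime filter.
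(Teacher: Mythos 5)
Your proof is correct, and the substantive implication $(3)\Rightarrow(1)$ is carried out by a genuinely different route than the paper's. The paper also argues by contradiction from a pair $a\neq b$ with $a^{\top}=b^{\top}$, but it manufactures the separating prime filter ``by hand'': it applies Zorn's Lemma to the family of filters containing one of the two elements and omitting the other, and then uses the distributivity of ${\rm{Fi}}({\bf{A}})$ (via the identity $(G \veebar [x)) \cap (G \veebar [y)) = G \veebar [x \vee y)$) to show that a maximal member of that family is a prime filter. You instead observe that $a \nleq b$ makes $[a)$ and $(b]$ disjoint, invoke the already-available separation result (Theorem \ref{prime_theo}) to get a prime ideal $P$ containing $b$ and missing $a$, and take $F = A\setminus P$, verifying that the set-complement of a nonempty proper prime ideal is a prime filter. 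Both arguments are sound; yours is more economical in that it delegates the Zorn-type argument to Theorem \ref{prime_theo} and replaces the maximality-implies-primeness computation by the elementary complement check, while the paper's version is self-contained at the level of the filter lattice. The remaining steps (deriving the contradiction $b\in F$ from hypothesis $(3)$ via the characterization of $\alpha$-filters in Proposition \ref{theocaract1}, and the treatment of $(1)\Rightarrow(2)$ and $(2)\Rightarrow(3)$) coincide with the paper's in substance.
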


\begin{proof}
$(1) \Rightarrow (2)$ It follows by Proposition \ref{theocaract1} and $(2) \Rightarrow (3)$ it is immediate. We only prove $(3) \Rightarrow (1)$. Let $a,b \in A$ such that $a^{\top} = b^{\top}$ and $a \neq b$. Without loss of generality, suppose $a \nleq b$. Let $\mathcal{F} = \{ F \in {\rm{Fi}}(A) \colon b \in F, \hspace{0.1cm} a \notin F  \}$. Then $[a) \in \mathcal{F}$ and $\mathcal{F} \neq \emptyset$. By Zorn's Lemma, there is a maximal element $G$ of $\mathcal{F}$. We see that $G$ is prime. Let $x,y \in A$ be such that $x \vee y \in G$ and $x,y \notin G$. Consider the filters $G_{x} = {\rm{Fig}}(G \cup \{x\})$ and $G_{y} = {\rm{Fig}}(G \cup \{y\})$. Thus, $G_{x}, G_{y} \notin \mathcal{F}$ and $a \in G_{x} \cap G_{y}$. Since ${\rm{Fi}}({\bf{A}})$ is a distributive lattice we have 
\[
G_{x} \cap G_{y} = (G \veebar [x)) \cap (G \veebar [y)) = G \veebar [x \vee y) = G
\]
and $a \in G$, which is a contradiction. Then $G$ is a prime filter. So, by hypothesis, $G$ is an $\alpha$-filter. Then $a \in a^{\top \top} = b^{\top \top} \subseteq G$ and $a \in G$ which is impossible and we conclude $a \leq b$. Analogously we can prove $b \leq a$ and therefore $\rho$ is 1-1.
\end{proof}

\section{Quasicomplemented distributive nearlattices} \label{Sec3}

In this section, the main of this paper, we study the notions of $\alpha$-ideal, $\alpha$-filter and ideal-congruence-kernel in quasicomplemented distributive nearlattice.

\begin{definition} \label{def_quasi}
Let ${\bf{A}}$ be a distributive nearlattice. We say that ${\bf{A}}$ is {\it{quasicomplemented}} if for each $a \in A$, there exists $b \in A$ such that $a^{\top \top} = b^{\top}$.
\end{definition}

\begin{remark} \label{cuasi-dense}
If ${\bf{A}}$ is a quasicomplemented distributive nearlattice, then ${\bf{A}}$ has a dense element. Indeed, as $1 \in A$, there exists $d \in A$ such that $1^{\top \top} = d^{\top}$. Since $1^{\top \top} = \{1\}$, we have $d^{\top} = \{1\}$ and ${\rm{D}}(A)$ is non-empty. Therefore, every $\alpha$-ideal of $A$ is non-empty.
\end{remark}

\begin{lemma}
Let ${\bf{A}}$ be a distributive nearlattice. If each annihilator is a principal filter, then ${\bf{A}}$ is quasicomplemented.
\end{lemma}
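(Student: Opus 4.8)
The goal is to show that for each $a \in A$ there is some $b \in A$ with $a^{\top\top} = b^{\top}$. The plan is to exhibit such a $b$ explicitly as the generator of the principal filter $a^{\top}$, and the heart of the argument will be the identity $[c)^{\top} = c^{\top}$ relating the pseudocomplement of a principal filter to the annihilator of its generator.

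First I would fix $a \in A$. By hypothesis every annihilator is a principal filter, so in particular $a^{\top} = [c)$ for some $c \in A$. Recall that $a^{\top\top} = (a^{\top})^{\top}$ is the pseudocomplement in ${\rm{Fi}}({\bf{A}})$ of the filter $a^{\top}$, namely
\[
a^{\top\top} = \{ x \in A \colon \forall f \in a^{\top}(x \vee f = 1) \}.
\]
Substituting $a^{\top} = [c)$ gives $a^{\top\top} = \{ x \in A \colon \forall f \in [c)(x \vee f = 1) \}$.

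Next I would prove the key identity $[c)^{\top} = c^{\top}$. For the inclusion $[c)^{\top} \subseteq c^{\top}$, note that $c \in [c)$, so any $x \in [c)^{\top}$ satisfies $x \vee c = 1$. Conversely, if $x \vee c = 1$ and $f \in [c)$, then $c \leq f$ forces $x \vee c \leq x \vee f$ by monotonicity of $\vee$, whence $x \vee f = 1$ since $1$ is the greatest element; thus $x \in [c)^{\top}$. Combining, $a^{\top\top} = [c)^{\top} = c^{\top}$, and taking $b = c$ yields $a^{\top\top} = b^{\top}$, which is exactly the defining condition of a quasicomplemented distributive nearlattice.

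There is no serious obstacle here: the statement reduces to the elementary identity $[c)^{\top} = c^{\top}$, which uses only that $\vee$ is monotone and that $1$ is the top element. The one point worth flagging is conceptual, namely that $a^{\top\top}$ is the pseudocomplement of the \emph{filter} $a^{\top}$ in ${\rm{Fi}}({\bf{A}})$ rather than the annihilator of some single element, so one must first unfold the definition of $(\cdot)^{\top}$ on filters before the principal-generator hypothesis can be brought to bear.
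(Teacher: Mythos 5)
Your proposal is correct and follows essentially the same route as the paper: take the generator $c$ of the principal filter $a^{\top}$ and use the identity $[c)^{\top} = c^{\top}$ to conclude $a^{\top\top} = c^{\top}$. The only difference is that you spell out the elementary verification of $[c)^{\top} = c^{\top}$, which the paper simply asserts via $[b)^{\top} = {\rm{Fig}}(\{b\})^{\top} = b^{\top}$.
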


\begin{proof}
Let $a \in A$ and consider $a^{\top}$. Then there is $b \in A$ such that $a^{\top} = [b)$. So, $a^{\top \top} = [b)^{\top} = {\rm{Fig}}(\{b\})^{\top} = b^{\top}$, i.e., $a^{\top \top} = b^{\top}$ and ${\bf{A}}$ is quasicomplemented.
\end{proof}

We have the following characterizations.

\begin{theorem} \label{theo_equi1}
Let ${\bf{A}}$ be a distributive nearlattice. Then the following conditions are equivalent:
\begin{enumerate}
\item ${\bf{A}}$ is quasicomplemented.
\item For each $a \in A$, there exists $b \in A$ such that $a^{\top} \cap b^{\top} = \{1\}$ and $a \vee b = 1$.
\end{enumerate}
\end{theorem}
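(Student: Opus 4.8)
The plan is to prove both implications directly using the properties of annihilators from Lemma \ref{prop_anni}, particularly part (1) and part (6).

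For the direction $(1) \Rightarrow (2)$, suppose $\mathbf{A}$ is quasicomplemented and fix $a \in A$. By definition there exists $b \in A$ with $a^{\top\top} = b^{\top}$. I would first verify that $a^{\top} \cap b^{\top} = \{1\}$: since $b^{\top} = a^{\top\top}$, this intersection equals $a^{\top} \cap a^{\top\top}$, and any element $x$ in both satisfies $x \vee a = 1$ (from $x \in a^{\top}$) while also being forced to lie in the pseudocomplement relationship; more directly, $a^{\top} \cap a^{\top\top} = \{1\}$ holds because $a^{\top\top}$ is the pseudocomplement of $a^{\top}$ in the Heyting algebra $\mathrm{Fi}(\mathbf{A})$, so their meet is the bottom filter $\{1\}$. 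For the equation $a \vee b = 1$, I would use Lemma \ref{prop_anni}(1), which gives $[a) \subseteq a^{\top\top} = b^{\top}$; since $a \in [a)$, we get $a \in b^{\top}$, which means precisely $a \vee b = 1$.

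For the converse $(2) \Rightarrow (1)$, fix $a \in A$ and take the $b \in A$ provided by (2), so that $a^{\top} \cap b^{\top} = \{1\}$ and $a \vee b = 1$. The goal is to show $a^{\top\top} = b^{\top}$, and I would prove the two inclusions separately. For $b^{\top} \subseteq a^{\top\top}$: take $x \in b^{\top}$ and any $y \in a^{\top}$; I want $x \vee y = 1$. From $y \in a^{\top}$ and $x \in b^{\top}$ I would combine with the hypotheses to deduce this, likely using that $a \vee b = 1$ together with distributivity in an appropriate principal filter. For the reverse inclusion $a^{\top\top} \subseteq b^{\top}$: the condition $a^{\top} \cap b^{\top} = \{1\}$ means $b^{\top}$ is contained in the pseudocomplement of $a^{\top}$, and since $a^{\top\top}$ is that pseudocomplement, I would instead show $b^{\top\top} \subseteq a^{\top}$ or argue via $a \vee b = 1$ that $b \in a^{\top}$, giving $a^{\top\top} \subseteq b^{\top}$ by Lemma \ref{prop_anni}(3) applied after taking double annihilators.

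The main obstacle will be the inclusion $b^{\top} \subseteq a^{\top\top}$ in the converse direction, since this requires extracting a join-equality $x \vee y = 1$ for arbitrary $x \in b^{\top}$ and $y \in a^{\top}$ using only the two scalar conditions $a^{\top} \cap b^{\top} = \{1\}$ and $a \vee b = 1$. I expect the key computation to take place inside the principal filter $[a \vee x)$ (or a similar bounded distributive lattice) where meets exist, allowing me to manipulate $a \vee b = 1$ and the annihilator conditions via the distributive law; the condition $a^{\top} \cap b^{\top} = \{1\}$ should be exactly what rules out any nontrivial common behaviour and forces $x \in a^{\top\top}$.
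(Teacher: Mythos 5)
Your direction $(1)\Rightarrow(2)$ is correct and is exactly what the paper intends. In $(2)\Rightarrow(1)$ your overall skeleton (prove the two inclusions $b^{\top}\subseteq a^{\top\top}$ and $a^{\top\top}\subseteq b^{\top}$ separately) also matches the paper, but you have crossed the wires on which hypothesis proves which inclusion, and as a result you declare a ``main obstacle'' exactly where there is none, while leaving that step as a speculative plan pointing at the wrong tools. The inclusion $b^{\top}\subseteq a^{\top\top}$ does \emph{not} come from $a\vee b=1$ plus distributivity inside a principal filter --- indeed $a\vee b=1$ alone cannot yield it (take $b=1$: then $a\vee b=1$ always, but $b^{\top}=A$ while $a^{\top\top}$ need not be $A$). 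It comes in one line from the \emph{other} hypothesis: $a^{\top}\cap b^{\top}=\{1\}$ is the bottom element of the pseudocomplemented lattice $\mathrm{Fi}(\mathbf{A})$, and since $a^{\top\top}$ is by definition the pseudocomplement of the filter $a^{\top}$, this forces $b^{\top}\subseteq a^{\top\top}$. (If you prefer an element-wise argument: for $x\in b^{\top}$ and $y\in a^{\top}$, the element $x\vee y$ lies in both $a^{\top}$ and $b^{\top}$ because annihilators are up-sets, so $x\vee y\in a^{\top}\cap b^{\top}=\{1\}$; no distributivity is used.) You actually state this pseudocomplement observation, but you file it under the reverse inclusion $a^{\top\top}\subseteq b^{\top}$, where it proves nothing.

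The reverse inclusion $a^{\top\top}\subseteq b^{\top}$ is the one that uses $a\vee b=1$, and here your secondary suggestion is the right one: $a\vee b=1$ means $b\in a^{\top}$, so every $x\in a^{\top\top}$ satisfies $x\vee b=1$ by the definition of $a^{\top\top}$, i.e.\ $x\in b^{\top}$. (Equivalently, $[b)\subseteq a^{\top}$ and antitonicity of $(-)^{\top}$ give $a^{\top\top}\subseteq b^{\top}$.) So the proposal is repairable by swapping the roles of the two hypotheses: $a^{\top}\cap b^{\top}=\{1\}$ gives $b^{\top}\subseteq a^{\top\top}$ via pseudocomplementation, and $a\vee b=1$ gives $a^{\top\top}\subseteq b^{\top}$ via $b\in a^{\top}$. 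As written, however, the step you identify as the crux is not carried out, and the strategy you sketch for it would not close it.
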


\begin{proof}
$(1) \Rightarrow (2)$ It follows from Definition \ref{def_quasi}.

$(2) \Rightarrow (1)$ Let $a \in A$. Then there exists $b \in A$ such that $a^{\top} \cap b^{\top} = \{1\}$ and $a \vee b = 1$. As ${\rm{Fi}}({\bf{A}})$ is a pseudocomplemented distributive lattice, then $b^{\top} \subseteq a^{\top \top}$. For the other inclusion, if $x \in a^{\top \top}$, then since $b \in a^{\top}$ we have $x \vee b = 1$ and $x \in b^{\top}$. Hence, $a^{\top \top} = b^{\top}$.
\end{proof}

Let ${\bf{A}}$ be a distributive nearlattice. For a subset $Y \subseteq A$ we define  
\[
\psi[Y] = \{ P \in {\rm{X}}(A) \colon Y \cap P \neq \emptyset \}.
\]

\begin{theorem}
Let ${\bf{A}}$ be a distributive nearlattice. Then the following conditions are equivalent:
\begin{enumerate}
\item ${\bf{A}}$ is quasicomplemented.
\item For each $a \in A$, there exists $b \in A$ such that 
\[
\psi [a^{\top \top}] \cap {\rm{X}}_{m}(A) = \psi[b^{\top}] \cap {\rm{X}}_{m}(A).
\]
\end{enumerate}
\end{theorem}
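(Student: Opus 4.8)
The plan is to reduce condition $(2)$ to the conjunction $a \vee b = 1$ and $a^{\top} \cap b^{\top} = \{1\}$, and then invoke Theorem \ref{theo_equi1}. The implication $(1) \Rightarrow (2)$ is immediate: if ${\bf{A}}$ is quasicomplemented, then for each $a$ there is $b$ with $a^{\top \top} = b^{\top}$, so $\psi[a^{\top \top}] = \psi[b^{\top}]$ and the two sets coincide after intersecting with ${\rm{X}}_{m}(A)$.

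For $(2) \Rightarrow (1)$, I would first rewrite both sides of the displayed equality purely in terms of membership in maximal ideals. By part $(9)$ of Lemma \ref{prop_anni}, for $U \in {\rm{X}}_{m}(A)$ one has $U \cap a^{\top \top} \neq \emptyset$ if and only if $a \in U$, so $\psi[a^{\top \top}] \cap {\rm{X}}_{m}(A) = \{ U \in {\rm{X}}_{m}(A) \colon a \in U \}$. Dually, by part $(8)$ of Lemma \ref{prop_anni}, $U \cap b^{\top} \neq \emptyset$ if and only if $b \notin U$, whence $\psi[b^{\top}] \cap {\rm{X}}_{m}(A) = \{ U \in {\rm{X}}_{m}(A) \colon b \notin U \}$. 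Therefore condition $(2)$ is equivalent to the statement that, for the chosen $b$, every maximal ideal $U$ satisfies $a \in U$ if and only if $b \notin U$.

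With this reformulation in hand, I would establish the two conditions of Theorem \ref{theo_equi1}. For $a \vee b = 1$: if $a \vee b \neq 1$, then $(a \vee b]$ is a proper ideal, and by Zorn's Lemma it extends to some $U \in {\rm{X}}_{m}(A)$ (a chain of proper ideals has proper union, since no proper ideal contains $1$); then $a, b \leq a \vee b \in U$ forces $a, b \in U$, contradicting $a \in U \Leftrightarrow b \notin U$. For $a^{\top} \cap b^{\top} = \{1\}$: if some $x \neq 1$ lies in $a^{\top} \cap b^{\top}$, I would extend $(x]$ to a maximal ideal $U$; since $x \in U \cap a^{\top}$ and $x \in U \cap b^{\top}$, part $(8)$ of Lemma \ref{prop_anni} gives $a \notin U$ and $b \notin U$, again contradicting the equivalence $a \in U \Leftrightarrow b \notin U$. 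Having verified both conditions, Theorem \ref{theo_equi1} yields that ${\bf{A}}$ is quasicomplemented.

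I expect the main work to be the translation step: correctly reading parts $(8)$ and $(9)$ of Lemma \ref{prop_anni} so that the annihilator $b^{\top}$ and the double annihilator $a^{\top \top}$ produce \emph{complementary} membership conditions ($b \notin U$ versus $a \in U$), which is precisely what lets the single equivalence do double duty in both contradictions. The remaining ingredient is the routine extension of a proper ideal to a maximal one; once the maximal-ideal description is set up, both claims follow by short indirect arguments.
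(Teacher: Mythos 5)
Your proof is correct, but it takes a genuinely different route from the paper's. The paper proves $(2)\Rightarrow(1)$ by establishing the annihilator identity $a^{\top\top}=b^{\top}$ directly: assuming $x\in a^{\top\top}\setminus b^{\top}$, it invokes Lemma \ref{prop_anni}(7) to produce a maximal ideal $U$ with $x\in U$ and $b\in U$, observes that $U$ then lies in $\psi[a^{\top\top}]\cap{\rm{X}}_{m}(A)=\psi[b^{\top}]\cap{\rm{X}}_{m}(A)$, and derives $1\in U$, a contradiction (the reverse inclusion being symmetric). You instead first translate condition $(2)$, via parts $(8)$ and $(9)$ of Lemma \ref{prop_anni}, into the spectral statement that $a\in U\Leftrightarrow b\notin U$ for every maximal ideal $U$, and then verify the two conditions $a\vee b=1$ and $a^{\top}\cap b^{\top}=\{1\}$ of Theorem \ref{theo_equi1} by two short separation arguments. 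Both routes are sound; yours factors through Theorem \ref{theo_equi1} and makes explicit why $(2)$ is exactly the maximal-spectrum shadow of quasicomplementation, which is conceptually tidier, at the mild cost of needing the existence of a maximal ideal above an arbitrary proper ideal (your Zorn argument for this is fine --- a union of a chain of proper ideals omits $1$ and is hence proper --- whereas the paper gets its maximal ideals ready-made from Lemma \ref{prop_anni}(7)). The paper's proof is more self-contained but repeats essentially the same contradiction twice for the two inclusions.
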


\begin{proof}
$(1) \Rightarrow (2)$ It is immediate by Definition \ref{def_quasi}.

$(2) \Rightarrow (1)$ Let $a \in A$. Then there exists $b \in A$ such that $\psi [a^{\top \top}] \cap {\rm{X}}_{m}(A) = \psi[b^{\top}] \cap {\rm{X}}_{m}(A)$. We prove $a^{\top \top} = b^{\top}$. Let $x \in a^{\top \top}$ and suppose $x \notin b^{\top}$. Then by Lemma \ref{prop_anni} there is $U \in {\rm{X}}_{m}(A)$ such that $x \in U$ and $b \in U$. So, $x \in a^{\top \top} \cap U$ and $U \in \psi [a^{\top \top}] \cap {\rm{X}}_{m}(A)$. Then $U \in \psi[b^{\top}] \cap {\rm{X}}_{m}(A)$ and $b^{\top} \cap U \neq \emptyset$. Thus, there is $y \in b^{\top}$ such that $y \in U$. As $b \in U$ and $y \vee b = 1$ we have $U = A$, which is a contradiction because $U$ is maximal. Therefore, $a^{\top \top} \subseteq b^{\top}$. The other inclusion is similar and ${\bf{A}}$ is quasicomplemented.
\end{proof}

\begin{corollary} \label{normal-quasi-R(A)}
Let ${\bf{A}}$ be a normal and quasicomplemented distributive nearlattice. Then every element of ${\rm{R}}(A)$ has its complement in ${\rm{R}}(A)$ thus ${\rm{R}}({\bf{A}})$ is a Boolean algebra.
\end{corollary}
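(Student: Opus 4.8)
My plan is to show directly that $\mathrm{R}(\mathbf{A})$ is a bounded distributive lattice in which every element is complemented, which is precisely a Boolean algebra. The first step is to locate the bounds of $\mathrm{R}(A)$ under inclusion. The top is $A = 1^{\top}$. For the bottom, since $\mathbf{A}$ is quasicomplemented, Remark \ref{cuasi-dense} provides a dense element $d$, so $\{1\} = d^{\top} \in \mathrm{R}(A)$; and because $1 \in a^{\top}$ for every $a \in A$, the filter $\{1\}$ is contained in every annihilator, hence is the least element of $\mathrm{R}(A)$.

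The structural key is that a distributive nearlattice possessing a least element is automatically a bounded distributive lattice: once $\{1\}$ is the bottom of $\mathrm{R}(A)$, the principal filter $[\{1\})$ of $\mathrm{R}(\mathbf{A})$ coincides with all of $\mathrm{R}(A)$, and by the definition of a distributive nearlattice this principal filter is a bounded distributive lattice. I would then make the operations explicit. The join is $a^{\top} \vee b^{\top} = \overline{m}(a^{\top},a^{\top},b^{\top}) = a^{\top} \veebar b^{\top}$, while the meet, computed in the principal filter with bottom $\{1\}$, is $a^{\top} \wedge b^{\top} = \overline{m}(a^{\top},b^{\top},\{1\}) = (a^{\top} \veebar \{1\}) \cap (b^{\top} \veebar \{1\}) = a^{\top} \cap b^{\top}$, using that joining with the least filter $\{1\}$ changes nothing. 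So the order is $\subseteq$, the join is $\veebar$, and the meet is $\cap$.

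Finally, I would exhibit complements. Fix any element $a^{\top} \in \mathrm{R}(A)$. By Theorem \ref{theo_equi1} there exists $b \in A$ with $a^{\top} \cap b^{\top} = \{1\}$ and $a \vee b = 1$. Then $a^{\top} \wedge b^{\top} = a^{\top} \cap b^{\top} = \{1\}$ is the bottom, and $a^{\top} \vee b^{\top} = a^{\top} \veebar b^{\top} = (a \vee b)^{\top} = 1^{\top} = A$ is the top. Since every element of $\mathrm{R}(A)$ has the form $a^{\top}$, this shows $\mathrm{R}(\mathbf{A})$ is a complemented bounded distributive lattice, hence a Boolean algebra. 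The step that needs genuine care, and the one where the hypotheses bite, is verifying that the join of a complementary pair actually reaches $A$: the equality $a^{\top} \veebar b^{\top} = (a \vee b)^{\top}$ is exactly Theorem \ref{normal}(2) and so requires normality, since in a merely pseudocomplemented setting $a^{\top} \veebar a^{\top\top}$ need not be the top element.
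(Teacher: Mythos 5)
Your proof is correct and follows essentially the same route as the paper, whose own proof is just the one-line citation of Remark \ref{cuasi-dense} (to get the bottom element $\{1\}=d^{\top}$), Theorem \ref{theo_equi1} (to produce the complementary candidate $b^{\top}$ with $a^{\top}\cap b^{\top}=\{1\}$), and Theorem \ref{normal} (to get $a^{\top}\veebar b^{\top}=(a\vee b)^{\top}=A$). You have simply made explicit the details the paper leaves to the reader, including the correct observation that normality is exactly what makes the join of the complementary pair reach the top.
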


\begin{proof}
It is a consequence of Remark \ref{cuasi-dense}, Theorems \ref{theo_equi1} and \ref{normal}.
\end{proof}

The following two examples show that there is not necessarily a relationship between the class of normal distributive nearlattices and the class of quasicomplemented distributive nearlattices.

\begin{example} \label{normal no cuasi}
Let ${\bf{A}}$ be the distributive nearlattice given by
\begin{center}
\begin{tikzpicture}[scale=.8,mipunto/.style ={color=black}]
\filldraw[mipunto] (0, 0.4) node{$1$};
\filldraw[mipunto] (-1.5,-1.85) node{$a$};
\filldraw[mipunto] (1.5,-1.85) node{$b$};

\filldraw[mipunto] (-1.5,-1.5) circle (2pt);
\filldraw[mipunto] (0,0) circle (2pt); 
\filldraw[mipunto] (1.5,-1.5) circle (2pt); 
\draw (-1.5,-1.5) -- (0,0) -- (1.5,-1.5);
\end{tikzpicture}
\end{center}
Then ${\bf{A}}$ is normal but not quasicomplemented because there does not exist $b \in A$ such that $1^{\top \top} = b^{\top}$.
\end{example}

\begin{example} \label{cuasi no normal}
Consider the distributive nearlattice ${\bf{A}}$ given by 
\begin{center}
\begin{tikzpicture}[scale=.8,mipunto/.style ={color=black}]
\filldraw[mipunto] (0, 0.4) node{$1$};
\filldraw[mipunto] (-1.5,-1.85) node{$a$};
\filldraw[mipunto] (1.5,-1.85) node{$b$};
\filldraw[mipunto] (-0.2, -3.2) node{$c$};
\filldraw[mipunto] (0, -4.9) node{$0$};

\filldraw[mipunto] (-1.5,-1.5) circle (2pt);
\filldraw[mipunto] (0,0) circle (2pt); 
\filldraw[mipunto] (1.5,-1.5) circle (2pt); 
\filldraw[mipunto] (0,-3) circle (2pt); 
\filldraw[mipunto] (0,-4.5) circle (2pt); 

\draw (-1.5,-1.5) -- (0,0) -- (1.5,-1.5) -- (0,-3) -- (0,-4.5);
\draw (-1.5,-1.5) -- (0,-3);
\end{tikzpicture}
\end{center}
Then ${\bf{A}}$ is quasicomplemented however it is not normal because $(a \vee b)^{\top} = A$ and $a^{\top} \veebar b^{\top} = {\rm{Fig}}(a^{\top} \cup b^{\top}) = \{1,a,b,c\}$, i.e., $(a \vee b)^{\top} \neq a^{\top} \veebar b^{\top}$.
\end{example}

By Lemma \ref{prop_anni} every maximal ideal is a prime $\alpha$-ideal. In the class of normal and quasicomplemented distributive nearlattices the converse holds also.

\begin{theorem} \label{prime alpha implica maximal}
Let ${\bf{A}}$ be a normal and quasicomplemented distributive nearlattice. Then every prime $\alpha$-ideal is maximal.
\end{theorem}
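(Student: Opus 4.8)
The plan is to fix a prime $\alpha$-ideal $P$ and show it coincides with the unique maximal ideal containing it. Since $P$ is proper and $\mathbf{A}$ is normal, Definition \ref{def_normal} gives a unique $U \in {\rm X}_m(A)$ with $P \subseteq U$; I will prove $U \subseteq P$, whence $P = U$ is maximal. So I take an arbitrary $a \in U$ and argue by contradiction that $a \in P$.

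First I would produce the quasicomplement: by Definition \ref{def_quasi} there is $b \in A$ with $a^{\top \top} = b^{\top}$. Since $a \in [a) \subseteq a^{\top \top} = b^{\top}$ by Lemma \ref{prop_anni}(1), we automatically get $a \vee b = 1$; and applying annihilators together with Lemma \ref{prop_anni}(2) yields $b^{\top \top} = a^{\top \top \top} = a^{\top}$. Thus $a$ and $b$ behave as mutual complements at the level of annihilators, with $a^{\top \top} = b^{\top}$ and $b^{\top \top} = a^{\top}$.

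Assume now $a \notin P$. Because $a \in U$ and $U$ is a proper ideal closed under $\vee$, the relation $a \vee b = 1$ forces $b \notin U$, hence $b \notin P$. The $\alpha$-ideal property of $P$ then does the decisive work: from $a \notin P$ it follows contrapositively that $P \cap a^{\top \top} = \emptyset$, that is $P \cap b^{\top} = \emptyset$, and from $b \notin P$ that $P \cap b^{\top \top} = \emptyset$, that is $P \cap a^{\top} = \emptyset$. But $P$ is prime and $a \vee b = 1$, so the normality criterion Theorem \ref{normal}(3) requires $P \cap a^{\top} \neq \emptyset$ or $P \cap b^{\top} \neq \emptyset$, contradicting the two emptiness statements just obtained. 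Hence $a \in P$, so $U \subseteq P$ and $P = U$ is maximal.

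I expect the main obstacle to be assembling the right combination of hypotheses rather than any computation. The quasicomplement supplies an element $b$ with $a \vee b = 1$ and the mirrored identities $a^{\top \top} = b^{\top}$, $b^{\top \top} = a^{\top}$, which are exactly what lets the $\alpha$-ideal property convert ``$a \notin P$'' and ``$b \notin P$'' into the two empty intersections $P \cap a^{\top} = P \cap b^{\top} = \emptyset$; this directly contradicts the normal-nearlattice condition of Theorem \ref{normal}(3). Passing first to the maximal ideal $U$ is what guarantees $b \notin P$, since the assumption $a \notin P$ on its own says nothing about the location of $b$.
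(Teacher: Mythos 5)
Your proof is correct, but it follows a genuinely different route from the paper's. The paper proves directly that $P$ satisfies the intrinsic characterization of maximal ideals in Lemma \ref{prop_anni}(8) (namely $a \notin P \Leftrightarrow P \cap a^{\top} \neq \emptyset$): it takes the quasicomplement $b$ of $a$, shows $(a]_{\alpha} \cap (b]_{\alpha} = {\rm{D}}(A) \subseteq P$, and then uses a dense element $d$ from Remark \ref{cuasi-dense} together with the ternary operation $m$ and the identity $m(a,b,d)^{\top} = \overline{m}(a^{\top},b^{\top},d^{\top})$ (valid by normality) to place $m(a,b,d)$ in $P$ and extract $b \in P \cap a^{\top}$ by primeness. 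You instead embed $P$ in a maximal ideal $U$ and show $U \subseteq P$: for $a \in U$ with quasicomplement $b$, the relation $a \vee b = 1$ forces $b \notin U \supseteq P$, the $\alpha$-ideal property applied to both $a \notin P$ and $b \notin P$ yields $P \cap b^{\top} = P \cap a^{\top\top} = \emptyset$ and $P \cap a^{\top} = P \cap b^{\top\top} = \emptyset$, and this contradicts condition (3) of Theorem \ref{normal}. All the individual steps check out ($a \in a^{\top\top} = b^{\top}$ gives $a \vee b = 1$; $b^{\top\top} = a^{\top\top\top} = a^{\top}$ by Lemma \ref{prop_anni}(2)). Your argument is the more economical one: it avoids the dense element, the operation $m$, the ideals $(a]_{\alpha}$, and Lemma \ref{prop_anni}(8) entirely, at the cost of first invoking a maximal ideal above $P$ (existence is a routine Zorn argument in a join-semilattice with $1$, and you do not actually need the uniqueness that normality provides, only condition (3) of Theorem \ref{normal}). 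The paper's computation, while heavier, has the side benefit of exhibiting explicitly the witness $b \in P \cap a^{\top}$ and the identity $(a]_{\alpha} \cap (b]_{\alpha} = {\rm{D}}(A)$, which is in the spirit of the machinery developed in the rest of Section \ref{Sec3}.
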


\begin{proof}
Let $P \in {\rm{X}}_{\alpha}(A)$ and $a \in A$ such that $a \notin P$. Since ${\bf{A}}$ is quasicomplemented, there exists $b \in A$ such that $a^{\top \top} = b^{\top}$. So, $b \in b^{\top \top} = a^{\top}$. Prove $(a]_{\alpha} \cap (b]_{\alpha} \subseteq {\rm{D}}(A)$. If $x \in (a]_{\alpha} \cap (b]_{\alpha}$, then $x^{\top} \subseteq a^{\top}$ and $x^{\top} \subseteq b^{\top}$. Let $y \in x^{\top}$. Then $y \in a^{\top} = b^{\top \top}$ and $y \in b^{\top}$ implies $y=1$. Thus, $x^{\top} = \{1\}$ and $x \in {\rm{D}}(A)$. As ${\rm{D}}(A)$ is the smallest $\alpha$-ideal of $A$ we have $(a]_{\alpha} \cap (b]_{\alpha} = {\rm{D}}(A)$. Then ${\rm{D}}(A) \subseteq P$ and $(a]_{\alpha} \cap (b]_{\alpha} \subseteq P$.

On the other hand, by Remark \ref{cuasi-dense}, there is $d \in A$ such that $d^{\top} = \{1\}$ and $d \in (a]_{\alpha} \cap (b]_{\alpha}$. Then $d^{\top} \subseteq a^{\top}$ and $d^{\top} \subseteq b^{\top}$. We get 
\[
(a^{\top} \veebar d^{\top}) \cap (b^{\top} \veebar d^{\top}) =  \overline{m}(a^{\top}, b^{\top}, d^{\top}) = m(a,b,d)^{\top}.
\]  
It follows $m(a,b,d)^{\top} \subseteq a^{\top}$ and $m(a,b,d)^{\top} \subseteq b^{\top}$, i.e., $m(a,b,d) \in (a]_{\alpha} \cap (b]_{\alpha}$ and $m(a,b,d) \in P$. Then $(a \vee d) \wedge_{d} (b \vee d) \in P$. Since $a \notin P$, we have $b \vee d \in P$ and $b \in P$. Thus, $b \in P \cap a^{\top}$ and $P \cap a^{\top} \neq \emptyset$. Conversely, it is easy to see that if $P \cap a^{\top} \neq \emptyset$, then $a \notin P$. By Lemma \ref{prop_anni} we have $P \in {\rm{X}}_{m}(A)$.
\end{proof}

\begin{lemma} \label{P prime in cuasi}
Let ${\bf{A}}$ be a quasicomplemented distributive nearlattice. Let $P \in {\rm{X}}_{m}(A)$ and $a,b \in A$. If $(a^{\top} \cap b^{\top})^{\top} \cap P \neq \emptyset$, then $a \in P$ or $b \in P$.
\end{lemma}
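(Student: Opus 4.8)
The plan is to reduce the two-variable expression $(a^{\top} \cap b^{\top})^{\top}$ to a single annihilator of the form $c^{\top}$, exploiting that ${\bf{A}}$ is quasicomplemented, and then transfer the hypothesis to $P$ using the maximal-ideal characterizations in Lemma \ref{prop_anni}. Crucially, the whole argument will be phrased using joins and annihilators only, so as to avoid meets like $a \wedge b$, which need not exist in a nearlattice.

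First I would invoke Definition \ref{def_quasi} to fix $a', b' \in A$ with $a^{\top \top} = (a')^{\top}$ and $b^{\top \top} = (b')^{\top}$. Applying the annihilator once more and using $c^{\top \top \top} = c^{\top}$ (Lemma \ref{prop_anni}(2)), these rewrite as $a^{\top} = (a')^{\top \top}$ and $b^{\top} = (b')^{\top \top}$. This is the key step that makes everything go through, because it replaces the filters $a^{\top}, b^{\top}$ by genuine double annihilators, to which clause (6) of Lemma \ref{prop_anni} applies.

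Next I would compute $a^{\top} \cap b^{\top} = (a')^{\top \top} \cap (b')^{\top \top} = (a' \vee b')^{\top \top}$ by Lemma \ref{prop_anni}(6), and then take the annihilator of both sides to obtain $(a^{\top} \cap b^{\top})^{\top} = (a' \vee b')^{\top \top \top} = (a' \vee b')^{\top}$, again by clause (2). Hence the hypothesis $(a^{\top} \cap b^{\top})^{\top} \cap P \neq \emptyset$ becomes $(a' \vee b')^{\top} \cap P \neq \emptyset$, and since $P$ is maximal, Lemma \ref{prop_anni}(8) converts this into $a' \vee b' \notin P$.

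Finally, because $P$ is an ideal, $a' \vee b' \notin P$ forces $a' \notin P$ or $b' \notin P$ (otherwise $a' \vee b'$ would lie in $P$). Assuming, say, $a' \notin P$, Lemma \ref{prop_anni}(8) gives $P \cap (a')^{\top} \neq \emptyset$, and since $(a')^{\top} = a^{\top \top}$, Lemma \ref{prop_anni}(9) yields $a \in P$; the case $b' \notin P$ gives $b \in P$ symmetrically. The only delicate point is the rewriting in the second and third steps: one must pass carefully through $c^{\top \top \top} = c^{\top}$ and clause (6) rather than trying to interpret $a^{\top} \cap b^{\top}$ as a meet, and this is exactly where the quasicomplemented hypothesis is indispensable.
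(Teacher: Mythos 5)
Your proof is correct and follows essentially the same route as the paper: both arguments choose $\tilde a,\tilde b$ with $a^{\top}=(\tilde a)^{\top\top}$ and $b^{\top}=(\tilde b)^{\top\top}$, rewrite $(a^{\top}\cap b^{\top})^{\top}$ as $(\tilde a\vee\tilde b)^{\top}$ via clauses (2) and (6) of Lemma \ref{prop_anni}, and then use clauses (8) and (9) to pass from $\tilde a\vee\tilde b\notin P$ to $a\in P$ or $b\in P$. The only difference is that you spell out the intermediate step $a^{\top\top\top}=a^{\top}$ explicitly, which the paper leaves implicit.
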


\begin{proof}
Let $P \in {\rm{X}}_{m}(A)$ and $a,b \in A$. Since ${\bf{A}}$ is quasicomplemented, there exist $\tilde{a}, \tilde{b} \in A$ such that $a^{\top} = (\tilde{a})^{\top \top}$ and $b^{\top} = (\tilde{b})^{\top \top}$. So, by Lemma \ref{prop_anni},  
\[
(a^{\top} \cap b^{\top})^{\top} = ((\tilde{a})^{\top \top} \cap (\tilde{b})^{\top \top})^{\top} = (\tilde{a} \vee \tilde{b})^{\top}
\]
and $(\tilde{a} \vee \tilde{b})^{\top} \cap P \neq \emptyset$. As $P \in {\rm{X}}_{m}(A)$, by Lemma \ref{prop_anni} we have $\tilde{a} \vee \tilde{b} \notin P$. Thus, $\tilde{a} \notin P$ or $\tilde{b} \notin P$. If $\tilde{a} \notin P$, then $(\tilde{a})^{\top} \cap P \neq \emptyset$ and $a^{\top \top} \cap P \neq \emptyset$. It follows $a \in P$. Analogously, if $\tilde{b} \notin P$, then $b \in P$. Therefore, $a \in P$ or $b \in P$.
\end{proof}

We know that ${\rm{Fi}}_{\alpha}({\bf{A}})$ is a Heyting algebra where the join of two filters $F$ and $G$ of $A$ is defined as $F {\underline{\sqcup}} G = \alpha(F {\veebar} G)$. In the class of normal and quasicomplemented distributive nearlattices we have the following characterization of $F {\underline{\sqcup}} G$.

\begin{theorem} 
Let ${\bf{A}}$ be a normal and quasicomplemented distributive nearlattice. If $F,G \in {\rm{Fi}}_{\alpha}(A)$, then 
\[
F {\underline{\sqcup}} G = \{ x \in A \colon \exists (f,g) \in F \times G [ x \in (f^{\top} \cap g^{\top})^{\top} ] \}.
\]
\end{theorem}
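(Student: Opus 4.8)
The plan is to reduce both sides to statements about annihilators and then prove the two inclusions. First I would rewrite the right-hand side $S$: for any filter $H$ one has $x \in H^{\top}$ iff $H \subseteq x^{\top}$, because $x \vee h = 1$ means precisely $h \in x^{\top}$; applying this with $H = f^{\top} \cap g^{\top}$ gives
\[
S = \{ x \in A \colon \exists (f,g) \in F \times G \ (f^{\top} \cap g^{\top} \subseteq x^{\top}) \}.
\]
On the other side, since $\mathbf{A}$ is normal, Proposition \ref{theocaract1} identifies $F \underline{\sqcup} G = \alpha(F \veebar G)$ with $\{ x \in A \colon \exists a \in F \veebar G \ (a^{\top} \subseteq x^{\top}) \}$. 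So everything comes down to comparing the annihilators $a^{\top}$ for $a \in F \veebar G$ with the sets $f^{\top} \cap g^{\top}$ for $f \in F$, $g \in G$.

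For $F \underline{\sqcup} G \subseteq S$, I would take $a \in F \veebar G$ with $a^{\top} \subseteq x^{\top}$ and use the description ${\rm{Fig}}(F \cup G) = \{ y_{1} \wedge \dots \wedge y_{n} \colon y_{i} \in F \cup G \}$ (valid because $F, G$ are up-sets, so $[F \cup G) = F \cup G$). The crucial observation is that each $y_{i} \geq a$, so $a$ is a common lower bound and all the relevant meets exist, being computed inside the bounded lattice $[a)$. Hence $f := \bigwedge \{ y_{i} \in F \}$ and $g := \bigwedge \{ y_{i} \in G \}$ exist in $A$ (read as $1$ if a group is empty), belong to $F$ and $G$ since filters are closed under existing meets, and satisfy $f^{\top} \cap g^{\top} = a^{\top}$ by Lemma \ref{prop_anni}(5). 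Then $f^{\top} \cap g^{\top} \subseteq x^{\top}$, so $x \in S$.

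The reverse inclusion $S \subseteq F \underline{\sqcup} G$ is where the real obstacle lies: given $f \in F$ and $g \in G$ with $f^{\top} \cap g^{\top} \subseteq x^{\top}$, the elements $f,g$ need not have any common lower bound, so $f \wedge g$ --- the natural witness in $F \veebar G$ --- may not exist. I would overcome this exactly as in the proof of Theorem \ref{prime alpha implica maximal}, by manufacturing a common lower bound from a dense element $d$, which exists by Remark \ref{cuasi-dense}. Passing to $f \vee d \in F$ and $g \vee d \in G$, the meet $m(f,g,d) = (f \vee d) \wedge_{d} (g \vee d)$ now exists and lies in $F \veebar G$; and since $\rho$ is a homomorphism onto the Boolean algebra ${\rm{R}}({\bf{A}})$ (Corollary \ref{normal-quasi-R(A)}) and $d^{\top} = \{1\}$ is the least element of ${\rm{Fi}}({\bf{A}})$, I get $m(f,g,d)^{\top} = \overline{m}(f^{\top}, g^{\top}, d^{\top}) = (f^{\top} \veebar \{1\}) \cap (g^{\top} \veebar \{1\}) = f^{\top} \cap g^{\top} \subseteq x^{\top}$. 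Taking $a = m(f,g,d)$ exhibits $x$ in $\alpha(F \veebar G) = F \underline{\sqcup} G$, completing the proof. The only delicate points throughout are these nearlattice meet-existence subtleties, neutralised in the first inclusion by the common bound $a$ and in the second by the dense-element/ternary-term device.
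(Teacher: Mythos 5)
Your proof is correct, and while the first inclusion coincides with the paper's argument (the paper likewise writes $a\in F\veebar G$ as a meet $f\wedge g$ with $f\in F$, $g\in G$ and applies $(f\wedge g)^{\top}=f^{\top}\cap g^{\top}$; your regrouping via the common lower bound $a$ just makes the meet-existence point explicit), the reverse inclusion takes a genuinely different route. The paper argues by contradiction: if $z\notin\alpha(F\veebar G)$ it separates $z$ from $\alpha(F\veebar G)$ by a prime $\alpha$-ideal $P$ (Theorem \ref{prime alpha_theo}), upgrades $P$ to a maximal ideal via Theorem \ref{prime alpha implica maximal}, and then invokes Lemma \ref{P prime in cuasi} to force $f\in P$ or $g\in P$. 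You instead produce a direct witness: a dense element $d$ from Remark \ref{cuasi-dense} makes $m(f,g,d)\in F\veebar G$ well defined, and normality gives $m(f,g,d)^{\top}=\overline{m}(f^{\top},g^{\top},d^{\top})=f^{\top}\cap g^{\top}\subseteq x^{\top}$, so $x\in\alpha(F\veebar G)$ by the description of $\alpha(\cdot)$ in Proposition \ref{theocaract1} --- exactly the device already used in the proofs of Theorem \ref{prime alpha implica maximal} and Lemma \ref{ideal I_F}. Your version is more elementary (no appeal to prime-ideal separation or to the maximality of prime $\alpha$-ideals) and it isolates what is actually used: quasicomplementation enters only through the existence of a dense element, so the identity holds for any normal distributive nearlattice with ${\rm{D}}(A)\neq\emptyset$. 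The paper's route, by contrast, reuses its previously built machinery on prime $\alpha$-ideals; both are valid.
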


\begin{proof}
Let $W = \{ x \in A \colon \exists (f,g) \in F \times G [ x \in (f^{\top} \cap g^{\top})^{\top} ] \}$. If $x \in F {\underline{\sqcup}} G = \alpha (F \veebar G)$, then there is $a \in F \veebar G = {\rm{Fig}}(F \cup G)$ such that $a^{\top} \subseteq x^{\top}$. Thus, there exist $f \in F$ and $g \in G$ such that $f \wedge g$ exists and $a = f \wedge g$. We have $a^{\top} = f^{\top} \cap g^{\top}$ and $x \in x^{\top \top} \subseteq a^{\top \top} = (f^{\top} \cap g^{\top})^{\top}$. Then $x \in W$. 

Conversely, if $z \in W$, then there exists $(f,g) \in F \times G$ such that $z \in (f^{\top} \cap g^{\top})^{\top}$. We suppose $z \notin F {\underline{\sqcup}} G$. Then by Theorem \ref{prime alpha_theo} there exists $P \in {\rm{X}}_{\alpha}(A)$ such that $z \in P$ and $P \cap \alpha (F \veebar G) = \emptyset$. Since ${\bf{A}}$ is quasicomplemented, by Theorem \ref{prime alpha implica maximal} we get $P \in {\rm{X}}_{m}(A)$. On the other hand, $z \in (f^{\top} \cap g^{\top})^{\top} \cap P$ and by Lemma \ref{P prime in cuasi}, $f \in P$ or $g \in P$. If $f \in P$, then $f \in \alpha (F \veebar G)$ and $P \cap \alpha (F \veebar G) \neq \emptyset$ which is a contradiction. Analogously if $g \in P$. We conclude that $z \in F {\underline{\sqcup}} G$.
\end{proof}

\begin{theorem}
Let ${\bf{A}}$ be a normal distributive nearlattice. If every dense $\alpha$-filter contains a dense element, then ${\bf{A}}$ is quasicomplemented.
\end{theorem}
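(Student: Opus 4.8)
The plan is to verify condition $(2)$ of Theorem \ref{theo_equi1}: I will show that for each $a \in A$ there exists $b \in A$ with $a^{\top} \cap b^{\top} = \{1\}$ and $a \vee b = 1$, which by that theorem is equivalent to ${\bf{A}}$ being quasicomplemented. The driving observation is that the filter $F = a^{\top} \veebar a^{\top \top}$ is dense: since $a^{\top \top}$ is the pseudocomplement of $a^{\top}$ in the Heyting algebra ${\rm{Fi}}({\bf{A}})$, one computes $F^{\top} = (a^{\top})^{\top} \cap (a^{\top \top})^{\top} = a^{\top \top} \cap a^{\top \top \top} = a^{\top \top} \cap a^{\top} = \{1\}$, using that the pseudocomplement of a join is the meet of the pseudocomplements together with Lemma \ref{prop_anni}$(2)$.

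Next I would feed this dense filter into the hypothesis. Since $F$ need not be an $\alpha$-filter, I pass to its $\alpha$-closure: because ${\bf{A}}$ is normal, Proposition \ref{theocaract1} guarantees that $\alpha(F) = \{x \in A \colon \exists c \in F (c^{\top} \subseteq x^{\top})\}$ is the smallest $\alpha$-filter containing $F$. As $F \subseteq \alpha(F)$ we get $\alpha(F)^{\top} \subseteq F^{\top} = \{1\}$, so $\alpha(F)$ is a dense $\alpha$-filter. By hypothesis it contains a dense element $d$, and the description of $\alpha(F)$ yields some $c \in F$ with $c^{\top} \subseteq d^{\top} = \{1\}$; hence $c$ is itself a dense element lying in $F = a^{\top} \veebar a^{\top \top}$.

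The heart of the argument is to decompose this dense $c$. Writing $F = {\rm{Fig}}(a^{\top} \cup a^{\top \top})$ and using the description of generated filters, $c = x_{1} \wedge \dots \wedge x_{n}$ with each $x_{i} \in a^{\top} \cup a^{\top \top}$ (note $[a^{\top} \cup a^{\top \top}) = a^{\top} \cup a^{\top \top}$ since both are filters). All the $x_{i}$ lie above $c$, hence in the bounded distributive lattice $[c)$, so grouping those in $a^{\top}$ into $p$ and those in $a^{\top \top}$ into $q$ produces two meets that exist in $A$ (any finite subset of the $x_{i}$ has the common lower bound $c$) and satisfy $p \in a^{\top}$, $q \in a^{\top \top}$ and $c = p \wedge q$; here I crucially use that a filter is closed under existing meets. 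Then by Lemma \ref{prop_anni}$(5)$, $p^{\top} \cap q^{\top} = (p \wedge q)^{\top} = c^{\top} = \{1\}$. Since $q \in a^{\top \top} = (a^{\top})^{\top}$ means every element of $a^{\top}$ is annihilated by $q$, we have $a^{\top} \subseteq q^{\top}$, whence $a^{\top} \cap p^{\top} \subseteq q^{\top} \cap p^{\top} = \{1\}$. Finally $p \in a^{\top}$ gives $a \vee p = 1$. Taking $b = p$ supplies condition $(2)$ of Theorem \ref{theo_equi1}, and since $a$ was arbitrary, ${\bf{A}}$ is quasicomplemented.

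I expect the decomposition $c = p \wedge q$ to be the only delicate point: in a nearlattice meets are only partially defined, so one must justify that the relevant subproducts of the $x_{i}$ exist --- which they do because all the $x_{i}$ share the lower bound $c$ and therefore live in the lattice $[c)$ --- and that filters absorb these meets. Everything else is a direct manipulation of annihilators and the pseudocomplement identities already recorded in Lemma \ref{prop_anni}, with normality entering only through the existence of the $\alpha$-closure $\alpha(F)$ supplied by Proposition \ref{theocaract1}.
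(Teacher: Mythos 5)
Your proposal is correct and follows essentially the same route as the paper: form the dense filter $a^{\top}\veebar a^{\top\top}$, pass to its $\alpha$-closure to extract a dense element of $a^{\top}\veebar a^{\top\top}$, decompose it as a meet of some $p\in a^{\top}$ and $q\in a^{\top\top}$, and conclude that $p$ is the required quasicomplement. The only cosmetic difference is that you verify condition $(2)$ of Theorem \ref{theo_equi1} for $b=p$, whereas the paper checks $a^{\top\top}=p^{\top}$ directly; your extra care in justifying the decomposition $c=p\wedge q$ (grouping the generators inside the lattice $[c)$) fills in a step the paper leaves implicit.
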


\begin{proof}
Let $a \in A$. Consider the filter $F = a^{\top} \veebar a^{\top \top}$. Then $a^{\top} \subseteq \alpha(F)$ and $a^{\top \top} \subseteq \alpha(F)$. Since ${\rm{Fi}}({\bf{A}})$ is a pseudocomplemented distributive lattice, we have $\alpha(F)^{\top} \subseteq a^{\top}$ and $\alpha(F)^{\top} \subseteq a^{\top \top}$. So, $\alpha(F)^{\top} \subseteq a^{\top} \cap a^{\top \top} = \{1\}$ and $\alpha(F)^{\top} = \{1\}$. As $\alpha(F)$ is a dense $\alpha$-filter, by hypothesis, there is $x \in \alpha(F)$ such that $x^{\top} = \{1\}$. It follows there exists $f \in F$ such that $f^{\top} \subseteq x^{\top} = \{1\}$, i.e., $f^{\top} = \{1\}$. Thus, as $f \in F$, there exist $y \in a^{\top}$ and $z \in a^{\top \top}$ such that $y \wedge z$ exists and $f = y \wedge z$. Then $f^{\top} = (y \wedge z)^{\top} = y^{\top} \cap z^{\top} = \{1\}$. So, $y^{\top} \subseteq z^{\top \top}$ because $z^{\top \top}$ is the pseudocomplement of $z^{\top}$. Moreover, $z^{\top \top} \subseteq a^{\top \top}$ and $y^{\top} \subseteq a^{\top \top}$. On the other hand, $y \in a^{\top}$ implies $a^{\top \top} \subseteq y^{\top}$. Therefore, $a^{\top \top} = y^{\top}$ and ${\bf{A}}$ is quasicomplemented.
\end{proof}

\subsection{Connection between $\alpha$-ideals and $\alpha$-filters} 

Now we are going to study the relationship between $\alpha$-filters and $\alpha$-ideals in normal and quasicomplemented distributive nearlattices. Let ${\bf{A}}$ be a distributive nearlattice and $I \in {\rm{Id}}(A)$. By the result given in \cite{CaGo21} the set 
\[
F_{I} = \{ x \in A \colon \exists i \in I (i \in x^{\top}) \}
\]
is an $\alpha$-filter. In this way, we can naturally associate an $\alpha$-filter to each ideal of $A$. Conversely, let $F \in {\rm{Fi}}(A)$ and consider 
\[
I_{F} = \{ x \in A \colon \exists f \in F (x^{\top} \subseteq f^{\top \top}) \}. 
\]
The set $I_{F}$ will be relevant to the rest of this section.

\begin{lemma} \label{ideal I_F}
Let ${\bf{A}}$ be a normal and quasicomplemented distributive nearlattice. If $F$ is an $\alpha$-filter of $A$, then $I_{F}$ is an $\alpha$-ideal.
\end{lemma}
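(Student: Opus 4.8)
The plan is to verify the two defining properties of an $\alpha$-ideal in turn: first that $I_{F}$ is an ideal, and then that it absorbs double annihilators. I expect the non-emptiness and downward closure to be routine. For non-emptiness I would use Remark \ref{cuasi-dense} to pick a dense element $d$; since $d^{\top} = \{1\}$ is the least filter, $d^{\top} \subseteq f^{\top \top}$ for every $f \in F$, so $d \in I_{F}$. Downward closure follows at once from Lemma \ref{prop_anni}(3): if $x \leq y$ and $y \in I_{F}$ with witness $f$ (so $y^{\top} \subseteq f^{\top \top}$), then $x^{\top} \subseteq y^{\top} \subseteq f^{\top \top}$, so $x \in I_{F}$.

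The heart of the argument will be closure under $\vee$, and this is where both normality and quasicomplementation enter. Given $x, y \in I_{F}$ with witnesses $f, g \in F$, normality (Theorem \ref{normal}(2)) gives $(x \vee y)^{\top} = x^{\top} \veebar y^{\top} \subseteq f^{\top \top} \veebar g^{\top \top}$, so the task reduces to producing a \emph{single} witness $h \in F$ with $f^{\top \top} \veebar g^{\top \top} \subseteq h^{\top \top}$. This is the main obstacle: there is no reason for $f$ and $g$ to admit a meet in the nearlattice, so I cannot simply set $h = f \wedge g$. My idea to get around this is to realise $f^{\top \top} \veebar g^{\top \top}$ itself as a double annihilator of a single element and then let the $\alpha$-filter hypothesis pull that element into $F$. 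Concretely, I would choose (by quasicomplementation) $\bar{f}, \bar{g}$ with $\bar{f}^{\top} = f^{\top \top}$ and $\bar{g}^{\top} = g^{\top \top}$, set $k = \bar{f} \vee \bar{g}$, and use quasicomplementation once more to choose $c$ with $c^{\top} = k^{\top \top}$. Then Lemma \ref{prop_anni}(2) and normality yield $c^{\top \top} = k^{\top} = (\bar{f} \vee \bar{g})^{\top} = \bar{f}^{\top} \veebar \bar{g}^{\top} = f^{\top \top} \veebar g^{\top \top}$.

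The decisive point is then that $c \in F$, and this is exactly where I use that $F$ is an $\alpha$-filter. Since $f, g \in F$, the $\alpha$-filter property gives $f^{\top \top} \subseteq F$ and $g^{\top \top} \subseteq F$, so $f^{\top \top} \veebar g^{\top \top} = {\rm{Fig}}(f^{\top \top} \cup g^{\top \top}) \subseteq F$; hence $c^{\top \top} \subseteq F$. By Lemma \ref{prop_anni}(1) we have $c \in [c) \subseteq c^{\top \top} \subseteq F$, so $c \in F$. Taking $h = c$ then gives $(x \vee y)^{\top} \subseteq c^{\top \top} = h^{\top \top}$, i.e. $x \vee y \in I_{F}$, which finishes the proof that $I_{F}$ is an ideal.

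Finally, for the $\alpha$-ideal condition I would argue directly, and here I expect no further use of normality or quasicomplementation. Suppose $a \in A$ and $z \in I_{F} \cap a^{\top \top}$, with witness $f \in F$ for $z$, so $z^{\top} \subseteq f^{\top \top}$. The elementary observation is that $z \in a^{\top \top} = (a^{\top})^{\top}$ means exactly $x \vee z = 1$ for every $x \in a^{\top}$, that is $a^{\top} \subseteq z^{\top}$. Combining the two inclusions gives $a^{\top} \subseteq z^{\top} \subseteq f^{\top \top}$, so $a \in I_{F}$ with the same witness $f$. This shows $I_{F}$ is an $\alpha$-ideal. In summary, the only genuinely delicate step is the join-closure, whose crux is manufacturing the witness $c$ inside $F$; everything else should be a short verification.
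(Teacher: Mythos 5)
Your proof is correct, and apart from the join-closure step it coincides with the paper's (non-emptiness via the dense element, downward closure via Lemma \ref{prop_anni}, and the $\alpha$-ideal condition via $a^{\top}\subseteq z^{\top}\subseteq f^{\top\top}$ are all the same). The join-closure, which you rightly identify as the crux, goes by a genuinely different route. The paper manufactures its witness with the ternary operation: it picks a dense element $d$ (Remark \ref{cuasi-dense}) and takes $h=m(f_{1},f_{2},d)=(f_{1}\vee d)\wedge_{d}(f_{2}\vee d)$, which lies in $F$ merely because $F$ is a filter, and whose annihilator equals $f_{1}^{\top}\cap f_{2}^{\top}$ by normality (via $\rho$ being a homomorphism onto ${\rm{R}}({\bf{A}})$); it then compares $f_{1}^{\top}\cap f_{2}^{\top}$ with $(x\vee y)^{\top\top}=x^{\top\top}\cap y^{\top\top}$ and flips the inclusion using $(-)^{\top\top\top}=(-)^{\top}$. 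You instead work on the side of $(x\vee y)^{\top}=x^{\top}\veebar y^{\top}$, invoke quasicomplementation twice to realise $f^{\top\top}\veebar g^{\top\top}$ as a double annihilator $c^{\top\top}$, and then use the $\alpha$-filter hypothesis on $F$ to force $c\in F$. Both arguments are valid; the trade-off is that yours leans on the full strength of the hypotheses (quasicomplementation beyond the mere existence of a dense element, and the $\alpha$-filter property of $F$ already in the ideal part), whereas the paper's argument never actually uses that $F$ is an $\alpha$-filter and so shows, as a by-product, that $I_{F}$ is an $\alpha$-ideal for an arbitrary filter $F$ of a normal quasicomplemented distributive nearlattice.
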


\begin{proof}
By Remark \ref{cuasi-dense} note that the set $I_{F}$ is non-empty. Let $x,y \in A$ be such that $x \leq y$ and $y \in I_{F}$. Then there is $f \in F$ such that $y^{\top} \subseteq f^{\top \top}$. By Lemma \ref{prop_anni}, $x^{\top} \subseteq y^{\top}$ and $x^{\top} \subseteq f^{\top \top}$, i.e., $x \in I_{F}$ and $I_{F}$ is decreasing. Let $x,y \in I_{F}$. Then there exist $f_{1}, f_{2} \in F$ such that $x^{\top} \subseteq f_{1}^{\top \top}$ and $y^{\top} \subseteq f_{2}^{\top \top}$. So, $f_{1}^{\top} \subseteq x^{\top \top}$, $f_{2}^{\top} \subseteq y^{\top \top}$ and by Lemma \ref{prop_anni}, $f_{1}^{\top} \cap f_{2}^{\top} \subseteq x^{\top \top} \cap y^{\top \top} = (x \vee y)^{\top \top}$. On the other hand, by Remark \ref{cuasi-dense}, there is $d \in A$ such that $d^{\top} = \{1\}$. Then $m(f_{1}, f_{2}, d) \in F$ and as ${\bf{A}}$ is normal,
\[
\begin{array}{rll}
m(f_{1}, f_{2}, d)^{\top} & = & \overline{m}(f_{1}^{\top}, f_{2}^{\top}, d^{\top}) \\
             & = & (f_{1}^{\top} \veebar \{1\}) \cap (f_{2}^{\top} \veebar \{1\}) \\
             & = & f_{1}^{\top} \cap f_{2}^{\top}. \\
\end{array}
\] 
Thus, $m(f_{1}, f_{2}, d)^{\top} \subseteq (x \vee y)^{\top \top}$. It follows $(x \vee y)^{\top} \subseteq m(f_{1}, f_{2}, d)^{\top \top}$ and $x \vee y \in I_{F}$. Hence, $I_{F}$ is an ideal of $A$. Prove that $I_{F}$ is an $\alpha$-ideal. Let $a \in A$ be such that $I_{F} \cap a^{\top \top} \neq \emptyset$. So, there is $x \in I_{F}$ such that $x \in a^{\top \top}$. Then there exists $f \in F$ such that $x^{\top} \subseteq f^{\top \top}$. By Lemma \ref{prop_anni}, $a^{\top} \subseteq x^{\top}$ and $a^{\top} \subseteq f^{\top \top}$, i.e., $a \in I_{F}$. 
\end{proof}

\begin{theorem} 
Let ${\bf{A}}$ be a normal and quasicomplemented distributive nearlattice. Then there exist a one-to-one correspondence between $\alpha$-ideals and $\alpha$-filters of $A$.
\end{theorem}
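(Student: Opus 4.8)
The plan is to show that the two assignments $F \mapsto I_F$ and $I \mapsto F_I$ are mutually inverse bijections between $\mathrm{Fi}_\alpha(A)$ and $\mathrm{Id}_\alpha(A)$. That they are well defined is already in hand: Lemma \ref{ideal I_F} gives $I_F \in \mathrm{Id}_\alpha(A)$ for every $\alpha$-filter $F$, while the cited result from \cite{CaGo21} gives $F_I \in \mathrm{Fi}_\alpha(A)$ for every ideal $I$, in particular for every $\alpha$-ideal. So the whole task reduces to verifying the two round trips $I_{F_I} = I$ for $I \in \mathrm{Id}_\alpha(A)$ and $F_{I_F} = F$ for $F \in \mathrm{Fi}_\alpha(A)$.

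I would begin with a bookkeeping simplification. Unwinding the definitions, $f \in F_I$ iff $i \vee f = 1$ for some $i \in I$, so $F_I = \bigcup_{i \in I} i^\top$; likewise $x \in F_{I_F}$ iff $x \in i^\top$ for some $i \in I_F$. These rewrites, together with the facts that each $a^\top$ and $a^{\top\top}$ is an $\alpha$-filter and that $^\top$ is order-reversing on subsets with $a^{\top\top\top} = a^\top$ (Lemma \ref{prop_anni}), are essentially the only tools needed besides quasicomplementation.

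For the containments $I \subseteq I_{F_I}$ and $F \subseteq F_{I_F}$ I would feed quasicomplementation directly. Given $x$ in $I$ (respectively in $F$), choose $b$ with $x^{\top\top} = b^\top$; then $b^{\top\top} = x^{\top\top\top} = x^\top$, so $b \in b^{\top\top} = x^\top$ by Lemma \ref{prop_anni}(1), i.e. $x \vee b = 1$. Taking $i = x$ (respectively $f = x$) exhibits $b$ as an element of $F_I$ (respectively of $I_F$) witnessing $x \in I_{F_I}$ (respectively $x \in F_{I_F}$). These directions are routine.

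The reverse containments are where the argument must be run carefully, and this is the step I expect to be the main obstacle. For $I_{F_I} \subseteq I$, from $y \in I_{F_I}$ one obtains $f \in F_I$ and $i \in I$ with $y^\top \subseteq f^{\top\top}$ and $f \in i^\top$; since $i^\top$ is an $\alpha$-filter, $f^{\top\top} \subseteq i^\top$, hence $y^\top \subseteq i^\top$, i.e. $y \in (i]_\alpha$, and because $I$ is an $\alpha$-ideal containing $i$ the minimality of $(i]_\alpha$ (Proposition \ref{theocaract1}) gives $y \in I$. For $F_{I_F} \subseteq F$, from $x \in F_{I_F}$ one obtains $i \in I_F$ with $x \in i^\top$ and some $f \in F$ with $i^\top \subseteq f^{\top\top}$; applying $^\top$ and using $f^{\top\top\top} = f^\top$ yields $f^\top \subseteq i^{\top\top}$, and since $x^\top$ is an $\alpha$-filter containing $i$ we have $i^{\top\top} \subseteq x^\top$, so $f^\top \subseteq x^\top$; by the duality of Lemma \ref{prop_anni}(4) this gives $x^{\top\top} \subseteq f^{\top\top}$, whence $x \in x^{\top\top} \subseteq f^{\top\top} \subseteq F$, the last inclusion holding because $F$ is an $\alpha$-filter containing $f$. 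Assembling the four containments yields $I_{F_I} = I$ and $F_{I_F} = F$, so the two maps are mutually inverse and the desired bijection follows.
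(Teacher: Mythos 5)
Your proposal is correct and follows essentially the same route as the paper: it establishes the two round trips $I_{F_I}=I$ and $F_{I_F}=F$, using quasicomplementation (choosing $b$ with $x^{\top\top}=b^\top$) for the forward inclusions and the $\alpha$-property plus the order-reversal of $(\cdot)^\top$ for the reverse ones. The only cosmetic differences are that you route $I_{F_I}\subseteq I$ through the minimality of $(i]_\alpha$ where the paper applies the definition of $\alpha$-ideal directly to $I\cap y^{\top\top}\neq\emptyset$, and you insert an extra application of $(\cdot)^\top$ in the $F_{I_F}\subseteq F$ step; both variants are valid under the stated hypotheses.
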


\begin{proof}
Let $I$ be an $\alpha$-ideal. We see that $I = I_{F_{I}}$. By Remark \ref{cuasi-dense}, $I$ is non-empty. Then $F_{I}$ is an $\alpha$-filter and by Lemma \ref{ideal I_F}, $I_{F_{I}}$ is an $\alpha$-ideal. Let $x \in I_{F_{I}}$. Then there exists $f \in F_{I}$ such that $x^{\top} \subseteq f^{\top \top}$. So, there is $i \in I$ such that $i \in f^{\top}$. It follows $f^{\top \top} \subseteq i^{\top}$ and $x^{\top} \subseteq i^{\top}$. Thus, $i^{\top \top} \subseteq x^{\top \top}$ and since $i \in i^{\top \top}$ we have $i \in I \cap x^{\top \top}$, i.e., $I \cap x^{\top \top} \neq \emptyset$. As $I$ is an $\alpha$-ideal, $x \in I$ and $I_{F_{I}} \subseteq I$. For the other inclusion, let $x \in I$ and consider the filter $x^{\top \top}$. Since ${\bf{A}}$ is quasicomplemented, there exists $y \in A$ such that $x^{\top \top} = y^{\top}$. Then $x \in y^{\top}$ and as $x \in I$  we have $y \in F_{I}$. On the other hand, $x^{\top} \subseteq y^{\top \top}$ and $y \in F_{I}$ implies $x \in I_{F_{I}}$. Hence, $I \subseteq I_{F_{I}}$.

Consider an $\alpha$-filter $F$ of $A$ and prove $F = F_{I_{F}}$. As $F$ is an $\alpha$-filter by Lemma \ref{ideal I_F} we know that $I_{F}$ is an $\alpha$-ideal. Since ${\bf{A}}$ is quasicomplemented, by Remark \ref{cuasi-dense} we have $I_{F}$ is a non-empty. Then $F_{I_{F}}$ is an $\alpha$-filter. Let $x \in F_{I_{F}}$. Then there exists $i \in I_{F}$ such that $i \in x^{\top}$. So, there is $f \in F$ such that $i^{\top} \subseteq f^{\top \top}$. It follows $x \in x^{\top \top} \subseteq i^{\top} \subseteq f^{\top \top}$, i.e., $x \in f^{\top \top}$ and since $F$ is an $\alpha$-filter, $f^{\top \top} \subseteq F$. Consequently $x \in F$ and $F_{I_{F}} \subseteq F$. Conversely, let $x \in F$ and we take the filter $x^{\top \top}$. As ${\bf{A}}$ is quasicomplemented, there exists $y \in A$ such that $x^{\top \top} = y^{\top}$. In particular, $y^{\top} \subseteq x^{\top \top}$ and $y \in I_{F}$. Moreover, since $x^{\top} = y^{\top \top}$, we have $y \in x^{\top}$. It follows $x \in F_{I_{F}}$ and therefore $F \subseteq F_{I_{F}}$. The result is followed.
\end{proof}

\subsection{Ideals-congruence-kernels} \label{Subsec3}

Let ${\bf{A}}$ be a distributive nearlattice and $\theta$ a congruence of a $A$. Then the equivalence class $|1|_{\theta} = {\rm{ker}} \theta = \{ a \in A \colon (a,1) \in \theta \}$ is called the {\it{kernel of $\theta$}}. So, $|1|_{\theta}$ is a filter of $A$. A subset $X$ of $A$ is called {\it{congruence-kernel}} if $X = {\rm{ker}} \theta$ for some congruence $\theta$ of $A$. If $I$ is an ideal of $A$, then it is easy to see that  
\[
\theta(I) = \{ (a,b) \in A \times A \colon \exists i \in I (a \vee i = b \vee i) \} 
\]
is a congruence of $A$. A congruence $\theta$ of $A$ is called {\it{ideal-congruence}} if $\theta = \theta(I)$ for some ideal $I$ of $A$. In particular, if $I$ is an $\alpha$-ideal, $\theta$ is called {\it{$\alpha$-ideal-congruence}}. The aim of this subsection is to characterize the ideal-congruence-kernels of a normal and quasicomplemented distributive nearlattice, and describe the smallest $\alpha$-ideal-congruence $\theta$ of $A$ such that $F = {\rm{ker}} \theta$ for some $\alpha$-filter $F$ of $A$.

\begin{theorem} \label{theo_con-ker-fil}
Let ${\bf{A}}$ be a normal and quasicomplemented distributive nearlattice. Let $F$ be a subset of $A$. Then $F$ is an $\alpha$-filter if and only if $F$ is an $\alpha$-ideal-congruence-kernel.
\end{theorem}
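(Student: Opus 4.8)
The plan is to reduce the statement to the one-to-one correspondence between $\alpha$-ideals and $\alpha$-filters established just above, and the key observation that makes this work is a direct computation of the kernel of an ideal-congruence. First I would show that for \emph{any} ideal $I$ of $A$ the kernel of $\theta(I)$ coincides with the set $F_{I}$ introduced at the beginning of the previous subsection. Indeed, $a \in {\rm{ker}} \theta(I)$ means $(a,1) \in \theta(I)$, i.e. there is $i \in I$ with $a \vee i = 1 \vee i$; since $1$ is the greatest element we have $1 \vee i = 1$, so this is equivalent to $a \vee i = 1$, that is, $i \in a^{\top}$. Hence ${\rm{ker}} \theta(I) = \{ a \in A \colon \exists i \in I\, (i \in a^{\top}) \} = F_{I}$.

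With this identity the ``if'' direction is immediate: if $F$ is an $\alpha$-ideal-congruence-kernel, then $F = {\rm{ker}} \theta(I) = F_{I}$ for some $\alpha$-ideal $I$, and by the result of \cite{CaGo21} recalled before Lemma \ref{ideal I_F} the set $F_{I}$ is an $\alpha$-filter. Thus $F$ is an $\alpha$-filter.

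For the ``only if'' direction, suppose $F$ is an $\alpha$-filter and put $I = I_{F}$. By Lemma \ref{ideal I_F}, since ${\bf{A}}$ is normal and quasicomplemented, $I_{F}$ is an $\alpha$-ideal, so $\theta(I_{F})$ is an $\alpha$-ideal-congruence. By the one-to-one correspondence between $\alpha$-ideals and $\alpha$-filters just proven we have $F_{I_{F}} = F$, and combining this with the identity of the first step gives $F = F_{I_{F}} = {\rm{ker}} \theta(I_{F})$. Therefore $F$ is an $\alpha$-ideal-congruence-kernel.

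The only real content of the argument is the identity ${\rm{ker}} \theta(I) = F_{I}$ of the first step; everything else is an application of Lemma \ref{ideal I_F}, the cited fact that each $F_{I}$ is an $\alpha$-filter, and the correspondence theorem. The point to watch is that both standing hypotheses are genuinely used, namely through Lemma \ref{ideal I_F} and through the equality $F_{I_{F}} = F$, both of which require ${\bf{A}}$ to be normal and quasicomplemented; so I do not expect the kernel identity itself to be the obstacle, but rather making sure the correspondence is invoked with an $\alpha$-ideal (which $I_{F}$ is) so that $\theta(I_{F})$ really is an $\alpha$-ideal-congruence.
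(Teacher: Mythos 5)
Your proof is correct; the one computation it rests on, ${\rm{ker}}\,\theta(I)=F_{I}$ for an ideal $I$ (since $a\vee i=1\vee i$ is just $a\vee i=1$, i.e.\ $i\in a^{\top}$), is right, and both directions then follow as you say. The paper uses the same witness $I_{F}$ but organizes the argument differently: it does not isolate the identity ${\rm{ker}}\,\theta(I)=F_{I}$, and instead of invoking the correspondence theorem it re-derives the two inclusions of $F={\rm{ker}}\,\theta(I_{F})$ from scratch --- the inclusion $F\subseteq{\rm{ker}}\,\theta(I_{F})$ via an intermediate claim that $I_{F}\cap a^{\top}\neq\emptyset$ for every $a\in F$ (proved by a separation argument with maximal ideals and quasicomplementation), and the converse inclusion by unwinding the definition of $I_{F}$ and using that $F$ is an $\alpha$-filter. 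Likewise, for the backward direction the paper verifies directly that ${\rm{ker}}\,\theta(I)$ satisfies the filter and $\alpha$-filter axioms, where you simply cite that $F_{I}$ is always an $\alpha$-filter. Your route is shorter and makes the logical dependence on the preceding subsection explicit (the theorem is essentially the correspondence $F=F_{I_{F}}$ read through the kernel identity); the paper's version is self-contained and in particular records the useful fact $I_{F}\cap a^{\top}\neq\emptyset$ for $a\in F$. The only point worth making explicit in your write-up is that $I_{F}$ (and any $\alpha$-ideal occurring as the parameter of an $\alpha$-ideal-congruence) is non-empty, by Remark \ref{cuasi-dense}, so that $\theta(I_{F})$ is indeed a congruence and $F_{I}$ contains $1$; this is exactly where quasicomplementation enters the backward direction.
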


\begin{proof}
If $F$ is an $\alpha$-filter, then by Lemma \ref{ideal I_F} and Remark \ref{cuasi-dense} we have $I_{F}$ is an non-empty $\alpha$-ideal of $A$. We prove $I_{F} \cap a^{\top} \neq \emptyset$, for every $a \in F$. Suppose the contrary, i.e., there is $a \in F$ such that $I_{F} \cap a^{\top} = \emptyset$. By Lemma \ref{prop_anni} there exists $U \in {\rm{X}}_{m}(A)$ such that $I_{F} \subseteq U$ and $a \in U$. So, $U \cap a^{\top} = \emptyset$. On the other hand, as ${\bf{A}}$ is quasicomplemented, there exists $b \in A$ such that $a^{\top \top} = b^{\top}$. In particular, $b^{\top} \subseteq a^{\top \top}$ and since $a \in F$ we have $b \in I_{F}$. Thus, $b \in U$ and $b \in b^{\top \top} = a^{\top}$, i.e., $b \in U \cap a^{\top}$ which is impossible. Hence $I_{F} \cap a^{\top} \neq \emptyset$, for every $a \in F$.

Now we prove $F = {\rm{ker}} \theta(I_{F})$. If $a \in F$, then $I_{F} \cap a^{\top} \neq \emptyset$. So, there is $i \in I_{F}$ such that $i \in a^{\top}$. It is clear that $(a,1) \in \theta(I_{F})$ and $a \in {\rm{ker}} \theta(I_{F})$. For the other inclusion, let $a \in {\rm{ker}} \theta(I_{F})$. Then there is $i \in I_{F}$ such that $a \vee i = 1$. So, there exists $f \in F$ such that $i^{\top} \subseteq f^{\top \top}$ and $a \in f^{\top \top}$. Since $F$ is an $\alpha$-filter, $f^{\top \top} \subseteq F$ and $a \in F$. Consequently, $F = {\rm{ker}} \theta(I_{F})$ and $F$ is an $\alpha$-ideal-congruence-kernel.

Conversely, suppose that $F$ is an $\alpha$-ideal-congruence-kernel. Then there exists an $\alpha$-ideal $I$ of $A$ such that $F = {\rm{ker}} \theta(I)$. We see that $F$ is an $\alpha$-filter. It is clear that $1 \in F$ and $F$ is increasing. Let $x,y \in F$ such that $x \wedge y$ exists. Then there exist $i_{1}, i_{2} \in I$ such that $x \vee i_{1} = y \vee i_{2}$. Let $i = i_{1} \vee i_{2} \in I$. Thus, $x,y \in i^{\top}$ and since $i^{\top}$ is a filter, $x \wedge y \in i^{\top}$ and $x \wedge y \in {\rm{ker}} \theta(I)$. Then $F$ is a filter of $A$. Let $a \in F$ and $x \in a^{\top \top}$. As $a \in {\rm{ker}} \theta(I)$, there is $i \in I$ such that $a \vee i = 1$, i.e., $i \in a^{\top}$. It follows $a^{\top} \subseteq x^{\top}$ and $i \in x^{\top}$. Then $x \vee i = 1$ and $x \in {\rm{ker}} \theta(I) = F$. We conclude that $a^{\top \top} \subseteq F$ and $F$ is an $\alpha$-filter.
\end{proof}

In the proof of the Theorem \ref{theo_con-ker-fil} we have defined a congruence $\theta(I_{F})$ such that $F = {\rm{ker}} \theta(I_{F})$ for an $\alpha$-filter $F$. In the following result we determinate the smallest $\alpha$-ideal-congruence $\theta$ such that $F = {\rm{ker}} \theta$.

\begin{theorem} 
Let ${\bf{A}}$ be a normal and quasicomplemented distributive nearlattice. Let $F$ be an $\alpha$-filter of  $A$. Then $\theta(I_{F})$ is the smallest $\alpha$-ideal-congruence such that $F = {\rm{ker}} \theta(I_{F})$. 
\end{theorem}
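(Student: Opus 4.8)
The plan is to split the claim into an existence part and a minimality part. Existence is already in hand from Theorem~\ref{theo_con-ker-fil}: it gives $F={\rm{ker}}\,\theta(I_{F})$, and since $F$ is an $\alpha$-filter, Lemma~\ref{ideal I_F} shows that $I_{F}$ is an $\alpha$-ideal (non-empty by Remark~\ref{cuasi-dense}). Hence $\theta(I_{F})$ is an $\alpha$-ideal-congruence whose kernel is $F$. What remains is to prove that $\theta(I_{F})$ is contained in every $\alpha$-ideal-congruence $\theta$ satisfying ${\rm{ker}}\,\theta=F$.

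First I would record two routine facts about the assignment $I\mapsto\theta(I)$. It is monotone: if $I\subseteq J$ are ideals, then directly from the definition $\theta(I)\subseteq\theta(J)$, since any witness $i\in I$ for a pair also lies in $J$. And its kernel is computed by ${\rm{ker}}\,\theta(J)=\{a\in A\colon\exists j\in J(a\vee j=1)\}=\{a\in A\colon\exists j\in J(j\in a^{\top})\}=F_{J}$. Consequently, writing an arbitrary $\alpha$-ideal-congruence with kernel $F$ as $\theta(J)$ for some $\alpha$-ideal $J$, the hypothesis ${\rm{ker}}\,\theta(J)=F$ becomes $F_{J}=F$, and by monotonicity it then suffices to establish the ideal inclusion $I_{F}\subseteq J$.

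The core step is that inclusion, and it proceeds exactly as the computation showing $I_{F_{I}}\subseteq I$ inside the proof of the correspondence between $\alpha$-ideals and $\alpha$-filters. I would take $x\in I_{F}$, so there is $f\in F$ with $x^{\top}\subseteq f^{\top\top}$. Since $f\in F=F_{J}$, there is $j\in J$ with $j\in f^{\top}$; then every element of $f^{\top\top}$ joins to $1$ with $j$, so $f^{\top\top}\subseteq j^{\top}$ and therefore $x^{\top}\subseteq j^{\top}$. By Lemma~\ref{prop_anni} this yields $j^{\top\top}\subseteq x^{\top\top}$, and as $j\in[j)\subseteq j^{\top\top}$ we obtain $j\in J\cap x^{\top\top}$. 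Because $J$ is an $\alpha$-ideal, $x\in J$, which proves $I_{F}\subseteq J$ and hence $\theta(I_{F})\subseteq\theta(J)$.

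I do not anticipate a serious obstacle; the only point needing care is the bookkeeping identification ${\rm{ker}}\,\theta(J)=F_{J}$, which is precisely what allows the $\alpha$-filter hypothesis on $F$ to interact with an arbitrary candidate ideal $J$. Combining minimality with the existence observation, $\theta(I_{F})$ is the smallest $\alpha$-ideal-congruence with kernel $F$. In fact the same computation, together with the established one-to-one correspondence between $\alpha$-ideals and $\alpha$-filters, forces $J=I_{F}$, so this congruence is even unique; but monotonicity together with $I_{F}\subseteq J$ already delivers the stated minimality.
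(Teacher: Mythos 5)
Your proposal is correct and follows essentially the same route as the paper: existence via Theorem \ref{theo_con-ker-fil}, Lemma \ref{ideal I_F} and Remark \ref{cuasi-dense}, then minimality by reducing to the ideal inclusion $I_{F}\subseteq J$ and proving it through the chain $x^{\top}\subseteq f^{\top\top}\subseteq j^{\top}$, whence $j\in J\cap x^{\top\top}$ and $x\in J$. The only cosmetic difference is your explicit identification ${\rm{ker}}\,\theta(J)=F_{J}$, which the paper uses implicitly.
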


\begin{proof}
By Lemma \ref{ideal I_F}, Remark \ref{cuasi-dense} and Theorem \ref{theo_con-ker-fil} we have that $I_{F}$ is an $\alpha$-ideal of $A$ such that $F = {\rm{ker}} \theta(I_{F})$. Let $J$ be an $\alpha$-ideal of $A$ such that $F = {\rm{ker}} \theta(J)$. If we prove $I_{F} \subseteq J$, then $\theta(I_{F}) \subseteq \theta(J)$. Let $x \in I_{F}$. Then there is $f \in F$ such that $x^{\top} \subseteq f^{\top \top}$. As $F = {\rm{ker}} \theta(J)$, there exists $j \in J$ such that $f \vee j = 1$. Then $f \in j^{\top}$ and $f^{\top \top} \subseteq j^{\top}$. It follows $x^{\top} \subseteq j^{\top}$ and by Lemma \ref{prop_anni}, $j^{\top \top} \subseteq x^{\top \top}$. Thus, $j \in J \cap x^{\top \top}$, i.e., $J \cap x^{\top \top} \neq \emptyset$ and since $J$ is an $\alpha$-ideal, $x \in J$ and $I_{F} \subseteq J$. 
\end{proof}

\section{Stone distributive nearlattices} \label{Sec4}

In this section we introduce the class of Stone distributive nearlattices and give different characterizations.  

\begin{definition}
Let ${\bf{A}}$ be a distributive nearlattice. We say that ${\bf{A}}$ is {\it{Stone}} if for each $a \in A$, we have $a^{\top} \veebar a^{\top \top} = A$.
\end{definition}

\begin{example}
The distributive nearlattice of Example \ref{normal no cuasi} is Stone.
\end{example}

\begin{example}
Recall that a join-semilattice with greatest element ${\bf{A}}$ is a {\it{semi-boolean algebra}} if each principal filter is a Boolean algebra \cite{Abb67}. Further, if ${\bf{A}}$ is a distributive nearlattice, then it was shown in \cite{Ca19} that ${\bf{A}}$ is a semi-boolean algebra if and only if $[a) \veebar a^{\top} = A$, for every $a \in A$. It follows that every semi-boolean algebra is a Stone distributive nearlattice.
\end{example}

\begin{lemma}
Let ${\bf{A}}$ be a Stone distributive nearlattice. Then $(a \wedge b)^{\top \top} = a^{\top \top} \veebar b^{\top \top}$, whenever $a \wedge b$ exists.
\end{lemma}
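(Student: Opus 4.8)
The plan is to prove the equality by two inclusions, only the second of which will use the Stone hypothesis.

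First I would dispose of $a^{\top\top}\veebar b^{\top\top}\subseteq (a\wedge b)^{\top\top}$, which holds in any distributive nearlattice. Since $a\wedge b\leq a$, Lemma \ref{prop_anni}(3) gives $(a\wedge b)^{\top}\subseteq a^{\top}$, and then Lemma \ref{prop_anni}(4) yields $a^{\top\top}\subseteq (a\wedge b)^{\top\top}$; symmetrically $b^{\top\top}\subseteq (a\wedge b)^{\top\top}$. As $(a\wedge b)^{\top\top}$ is a filter containing both $a^{\top\top}$ and $b^{\top\top}$, it contains the generated filter $a^{\top\top}\veebar b^{\top\top}$.

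For the reverse inclusion I would use Lemma \ref{prop_anni}(5) to write $(a\wedge b)^{\top\top}=(a^{\top}\cap b^{\top})^{\top}$, then invoke the Stone identity for $a$, namely $a^{\top}\veebar a^{\top\top}=A$, and intersect to get $(a\wedge b)^{\top\top}=(a\wedge b)^{\top\top}\cap(a^{\top}\veebar a^{\top\top})$. Because $\mathrm{Fi}(\mathbf{A})$ is a Heyting algebra, hence a distributive lattice, this splits as $[(a\wedge b)^{\top\top}\cap a^{\top}]\veebar[(a\wedge b)^{\top\top}\cap a^{\top\top}]$. The second summand is contained in $a^{\top\top}$ at once. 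For the first summand I would observe that $[(a\wedge b)^{\top\top}\cap a^{\top}]\cap b^{\top}=(a\wedge b)^{\top\top}\cap(a^{\top}\cap b^{\top})=(a\wedge b)^{\top\top}\cap(a\wedge b)^{\top}=\{1\}$, the last equality being the defining property of the pseudocomplement $(a\wedge b)^{\top\top}$ of $(a\wedge b)^{\top}$ in $\mathrm{Fi}(\mathbf{A})$. By the pseudocomplement characterization $W\cap b^{\top}=\{1\}$ if and only if $W\subseteq b^{\top\top}$, the first summand lies in $b^{\top\top}$. Hence $(a\wedge b)^{\top\top}\subseteq b^{\top\top}\veebar a^{\top\top}=a^{\top\top}\veebar b^{\top\top}$, which completes the argument.

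The main obstacle is organizing this reverse inclusion: one must apply the Stone identity to decompose the top element $A$ as $a^{\top}\veebar a^{\top\top}$ \emph{before} intersecting, so that distributivity of $\mathrm{Fi}(\mathbf{A})$ can be applied, and one must control the cross term $(a\wedge b)^{\top\top}\cap a^{\top}$ not by a direct containment but through the pseudocomplement characterization, using $a^{\top}\cap b^{\top}=(a\wedge b)^{\top}$. Everything else reduces to routine monotonicity from Lemma \ref{prop_anni} and the distributive (Heyting) structure of $\mathrm{Fi}(\mathbf{A})$. I note that the proof invokes the Stone identity only for $a$ and never for $b$ or $a\wedge b$, since the annihilators $a^{\top}$ and $b^{\top}$ are already regular (pseudocomplemented) elements of $\mathrm{Fi}(\mathbf{A})$.
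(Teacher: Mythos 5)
Your proposal is correct and follows essentially the same route as the paper: both obtain the hard inclusion by intersecting with the Stone decomposition $A=a^{\top}\veebar a^{\top\top}$, distributing, and controlling the cross term $(a\wedge b)^{\top\top}\cap a^{\top}$ via the pseudocomplement characterization of $b^{\top\top}$ together with $(a\wedge b)^{\top}=a^{\top}\cap b^{\top}$. The only cosmetic difference is that the paper phrases the key step for a general filter $F$ with $F^{\top}\cap(a\wedge b)^{\top}=\{1\}$ and then specializes to $F=(a\wedge b)^{\top}$, whereas you instantiate directly.
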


\begin{proof}
Let $a,b \in A$ be such that $a \wedge b$ exists. By Lemma \ref{prop_anni}, $a^{\top \top} \veebar b^{\top \top} \subseteq (a \wedge b)^{\top \top}$. We are going to show the converse inclusion. First note that if $F \in {\rm{Fi}}(A)$ such that $F^{\top} \cap (a \wedge b)^{\top} = \{1\}$, then $F^{\top} \cap a^{\top} \subseteq b^{\top \top}$. Indeed, by Lemma \ref{prop_anni} we have $F^{\top} \cap a^{\top} \cap b^{\top} = \{1\}$ and since $b^{\top \top}$ is the pseudocomplement of $b^{\top}$ in ${\rm{Fi}}({\bf{A}})$, we have $F^{\top} \cap a^{\top} \subseteq b^{\top \top}$. Let $F \in {\rm{Fi}}(A)$ be such that $F^{\top} \cap (a \wedge b)^{\top} = \{1\}$. Prove $F^{\top} \subseteq a^{\top \top} \veebar b^{\top \top}$. Since ${\bf{A}}$ is Stone and ${\rm{Fi}}({\bf{A}})$ is a distributive lattice, 
\[
\begin{array}{rll}
F^{\top} & = & F^{\top} \cap A \\
             & = & F^{\top} \cap (a^{\top} \veebar a^{\top \top}) \\
             & = & (F^{\top} \cap a^{\top}) \veebar (F^{\top} \cap a^{\top \top}) \\
             & \subseteq & b^{\top \top} \veebar a^{\top \top}. \\
\end{array}
\] 
In particular, if we take $F = (a \wedge b)^{\top} \in {\rm{Fi}}(A)$, then $(a \wedge b)^{\top \top} \subseteq b^{\top \top} \veebar a^{\top \top}$.
\end{proof}

\begin{theorem} \label{Stone_complemento}
Let ${\bf{A}}$ be a distributive nearlattice. Then the following conditions are equivalent:
\begin{enumerate}
\item ${\bf{A}}$ is Stone.
\item Every element of ${\rm{R}}(A)$ has its complement in ${\rm{Fi}}(A)$.
\end{enumerate}
\end{theorem}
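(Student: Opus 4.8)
The plan is to exploit the Heyting-algebra structure of ${\rm{Fi}}({\bf{A}})$ together with the fact, recalled earlier, that the pseudocomplement of the filter $a^{\top}$ is exactly $(a^{\top})^{\top} = a^{\top \top}$. The key background observation I would isolate first is purely lattice-theoretic: in any bounded distributive lattice an element $x$ admits a complement if and only if $x \vee x^{*} = \top$, where $x^{*}$ denotes its pseudocomplement; moreover in that case the complement necessarily coincides with $x^{*}$. Since ${\rm{Fi}}({\bf{A}})$ is a Heyting algebra with least element $\{1\}$ and greatest element $A$, and the pseudocomplement of $a^{\top}$ is $a^{\top \top}$, this turns the whole statement into a direct translation: the existence of a complement for $a^{\top}$ in ${\rm{Fi}}({\bf{A}})$ is equivalent to the identity $a^{\top} \veebar a^{\top \top} = A$, which is precisely the Stone condition applied to $a$.

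For $(1) \Rightarrow (2)$, I would fix $a^{\top} \in {\rm{R}}(A)$ and use the hypothesis $a^{\top} \veebar a^{\top \top} = A$. Because $a^{\top \top}$ is the pseudocomplement of $a^{\top}$ we automatically have $a^{\top} \cap a^{\top \top} = \{1\}$, so $a^{\top \top}$ satisfies both requirements to be a complement of $a^{\top}$ in ${\rm{Fi}}({\bf{A}})$; hence every element of ${\rm{R}}(A)$ is complemented. For $(2) \Rightarrow (1)$, I would take a complement $G \in {\rm{Fi}}(A)$ of $a^{\top}$, so that $a^{\top} \cap G = \{1\}$ and $a^{\top} \veebar G = A$. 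The characterization of $a^{\top \top}$ as the largest filter whose intersection with $a^{\top}$ is $\{1\}$ forces $G \subseteq a^{\top \top}$, and therefore $A = a^{\top} \veebar G \subseteq a^{\top} \veebar a^{\top \top} \subseteq A$, giving $a^{\top} \veebar a^{\top \top} = A$; since $a$ is arbitrary, ${\bf{A}}$ is Stone.

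I do not expect a serious obstacle here, as the argument is essentially a one-line consequence of the pseudocomplement calculus in ${\rm{Fi}}({\bf{A}})$. The only points demanding care are the two facts I lean on about the pseudocomplement, namely $a^{\top} \cap a^{\top \top} = \{1\}$ and the maximality inclusion $G \subseteq a^{\top \top}$; both follow from the relation $x \wedge x^{*} = 0$ and the universal (largest) property of $x^{*}$, and if needed I would make them fully explicit using the description $F^{\top} = \{ x \in A \colon \forall f \in F \, (x \vee f = 1) \}$ recalled in the preliminaries rather than invoking the abstract Heyting-algebra facts.
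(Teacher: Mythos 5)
Your proposal is correct and follows essentially the same route as the paper's own proof: both directions rest on the fact that $a^{\top\top}$ is the pseudocomplement of $a^{\top}$ in the Heyting algebra ${\rm{Fi}}({\bf{A}})$, giving $a^{\top}\cap a^{\top\top}=\{1\}$ for $(1)\Rightarrow(2)$ and the maximality inclusion $G\subseteq a^{\top\top}$ for $(2)\Rightarrow(1)$. No gaps.
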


\begin{proof}
$(1) \Rightarrow (2)$ Let $a^{\top} \in {\rm{R}}(A)$. Then by Lemma \ref{prop_anni} we have $a^{\top} \cap a^{\top \top} = \{1\}$, and since ${\bf{A}}$ is Stone, $a^{\top} \veebar a^{\top \top} = A$. Thus, $a^{\top}$ is complemented in ${\rm{Fi}}(A)$.

$(2) \Rightarrow (1)$ If $a \in A$, then $a^{\top} \in {\rm{R}}(A)$ and by hypothesis $a^{\top}$ is complemented in ${\rm{Fi}}(A)$, i.e., there exists $F \in {\rm{Fi}}(A)$ such that $a^{\top} \cap F = \{1\}$ and $a^{\top} \veebar F = A$. Thus, as $a^{\top \top}$ is the pseudocomplement of $a^{\top}$ in ${\rm{Fi}}(A)$, we get $F \subseteq a^{\top \top}$ and $A = a^{\top} \veebar F \subseteq a^{\top} \veebar a^{\top \top}$, i.e., $a^{\top} \veebar a^{\top \top} = A$. Hence, ${\bf{A}}$ is Stone.
\end{proof}

\begin{corollary}
Let ${\bf{A}}$ be a distributive nearlattice. If ${\bf{A}}$ is normal and quasicomplemented, then ${\bf{A}}$ is Stone.
\end{corollary}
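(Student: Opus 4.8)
The plan is to combine the two earlier structural results that are tailor-made for this situation: Corollary \ref{normal-quasi-R(A)}, which asserts that under normality and quasicomplementedness the nearlattice ${\rm R}({\bf A})$ is a Boolean algebra, and Theorem \ref{Stone_complemento}, which characterizes the Stone property as the statement that every element of ${\rm R}(A)$ has a complement in ${\rm Fi}(A)$. If I can transfer ``has a complement in ${\rm R}(A)$'' to ``has a complement in ${\rm Fi}(A)$'', the corollary follows at once.

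First I would fix $a \in A$ and consider $a^{\top} \in {\rm R}(A)$. By Corollary \ref{normal-quasi-R(A)} the nearlattice ${\rm R}({\bf A})$ is Boolean, so $a^{\top}$ has a complement inside ${\rm R}(A)$, and I claim this complement is $a^{\top \top}$. Indeed, by Lemma \ref{prop_anni} we always have $a^{\top} \cap a^{\top \top} = \{1\}$, while quasicomplementedness guarantees $a^{\top \top} = b^{\top} \in {\rm R}(A)$ for some $b \in A$, so $a^{\top \top}$ is a genuine element of ${\rm R}(A)$. Since $a^{\top \top}$ is the pseudocomplement of $a^{\top}$ in ${\rm Fi}({\bf A})$ and lies in ${\rm R}(A)$, it is also the pseudocomplement of $a^{\top}$ within ${\rm R}(A)$; in a Boolean algebra the pseudocomplement coincides with the complement, forcing $a^{\top} \veebar a^{\top \top} = A$.

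The one point that requires care --- and the only possible obstacle --- is verifying that the Boolean-algebra operations on ${\rm R}(A)$ agree with the corresponding operations in ${\rm Fi}({\bf A})$, so that a complement computed in ${\rm R}(A)$ really is a complement in ${\rm Fi}({\bf A})$. This is routine: the join induced on ${\rm R}(A)$ is $\overline{m}(x,x,y) = (x \veebar y) \cap (x \veebar y) = x \veebar y$, the meet is $\cap$, the top is $A$ and the bottom is $\{1\}$, all of which coincide with the lattice structure of the Heyting algebra ${\rm Fi}({\bf A})$. Hence the equality $a^{\top} \veebar a^{\top \top} = A$ obtained above is exactly condition $(2)$ of Theorem \ref{Stone_complemento}, and I conclude that ${\bf A}$ is Stone. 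Since that equality is precisely the defining condition of a Stone distributive nearlattice, one may in fact bypass Theorem \ref{Stone_complemento} entirely and appeal only to the definition, using Corollary \ref{normal-quasi-R(A)} to supply $a^{\top} \veebar a^{\top \top} = A$ directly for every $a \in A$.
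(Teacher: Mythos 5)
Your proof is correct and takes essentially the same route as the paper, whose proof of this corollary is exactly the one-line appeal to Corollary \ref{normal-quasi-R(A)} and Theorem \ref{Stone_complemento}. You simply make explicit the routine verification that the Boolean operations on ${\rm{R}}(A)$ restrict those of ${\rm{Fi}}({\bf{A}})$, so that the complement of $a^{\top}$ in ${\rm{R}}(A)$ is its pseudocomplement $a^{\top\top}$ and hence $a^{\top} \veebar a^{\top\top} = A$.
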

\begin{proof}
It follows by Corollary \ref{normal-quasi-R(A)} and Theorem \ref{Stone_complemento}.
\end{proof}

\begin{remark}
In the Example \ref{cuasi no normal}, the annihilators $a^{\top}$ and $b^{\top}$ have no complement in ${\rm{Fi}}(A)$. Then by Theorem \ref{Stone_complemento} we have ${\bf{A}}$ is not Stone.
\end{remark}

\begin{theorem} \label{Stone implica normal}
Let ${\bf{A}}$ be a distributive nearlattice. If ${\bf{A}}$ is Stone, then ${\bf{A}}$ is normal.
\end{theorem}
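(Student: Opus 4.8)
The plan is to verify condition $(3)$ of Theorem \ref{normal}, which is the most convenient reformulation of normality for this purpose. So I would fix a prime ideal $P \in {\rm{X}}(A)$ and elements $a,b \in A$ with $a \vee b = 1$, and aim to show that $P \cap a^{\top} \neq \emptyset$ or $P \cap b^{\top} \neq \emptyset$.

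The first step is a reduction. Since $a \vee b = 1$ we have $b \in a^{\top}$, and therefore every $x \in a^{\top\top}$ satisfies $x \vee b = 1$; that is, $a^{\top\top} \subseteq b^{\top}$. Consequently $P \cap a^{\top\top} \neq \emptyset$ already forces $P \cap b^{\top} \neq \emptyset$, and it suffices to prove the weaker dichotomy that $P \cap a^{\top} \neq \emptyset$ or $P \cap a^{\top\top} \neq \emptyset$. Note that this reduced claim no longer mentions $b$ and depends only on the Stone hypothesis applied to the single element $a$.

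For the reduced claim I would argue by contradiction, passing to the complement. Assume $P \cap a^{\top} = \emptyset$ and $P \cap a^{\top\top} = \emptyset$, and set $Q = A \setminus P$. Since $P$ is a prime ideal, $Q$ is a filter: it contains $1$, it is upward closed because $P$ is decreasing, and it is closed under existing meets because $P$ is prime. The two assumptions say exactly that $a^{\top} \subseteq Q$ and $a^{\top\top} \subseteq Q$, so the smallest filter containing their union is contained in $Q$, i.e. $a^{\top} \veebar a^{\top\top} \subseteq Q$. But ${\bf{A}}$ is Stone, so $a^{\top} \veebar a^{\top\top} = A$, forcing $Q = A$ and hence $P = \emptyset$. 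This contradicts the fact that prime ideals are non-empty by definition, completing the proof of the dichotomy and, together with the reduction above, of condition $(3)$.

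The argument is essentially formal once the reduction is made; the only points requiring care are the verification that the set-theoretic complement of a prime ideal is genuinely a filter (where primeness is used precisely to guarantee closure under existing meets) and the observation that the Stone identity $a^{\top} \veebar a^{\top\top} = A$ is exactly the statement that the filter generated by $a^{\top} \cup a^{\top\top}$ exhausts $A$. I expect no serious obstacle here, since the Stone hypothesis is being used in its sharpest form and the inclusion $a^{\top\top} \subseteq b^{\top}$ comes for free from $a \vee b = 1$.
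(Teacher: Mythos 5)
Your proof is correct, but it takes a genuinely different route from the paper. The paper verifies condition $(2)$ of Theorem \ref{normal}: it fixes $x \in (a \vee b)^{\top}$, shows $[x) \cap a^{\top\top} \subseteq b^{\top}$, and then uses the Stone identity together with distributivity of ${\rm{Fi}}({\bf{A}})$ to decompose $[x) = ([x) \cap a^{\top}) \veebar ([x) \cap a^{\top\top}) \subseteq a^{\top} \veebar b^{\top}$, thereby establishing the identity $(a \vee b)^{\top} = a^{\top} \veebar b^{\top}$ outright. You instead verify condition $(3)$, reducing via the observation $a^{\top\top} \subseteq b^{\top}$ (valid since $a \vee b = 1$ gives $b \in a^{\top}$) to a dichotomy about a single element, and then exploiting the fact that the set-theoretic complement of a prime ideal is a filter; the Stone identity then forces that filter to be all of $A$, contradicting non-emptiness of the prime ideal. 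All the steps check out: $Q = A \setminus P$ is indeed a filter (primeness gives closure under existing meets, properness gives $1 \in Q$), and a filter containing $a^{\top} \cup a^{\top\top}$ must contain $a^{\top} \veebar a^{\top\top} = {\rm{Fig}}(a^{\top} \cup a^{\top\top})$. What each approach buys: the paper's computation yields the explicit formula for $(a\vee b)^{\top}$ as a by-product, which is the form of normality used elsewhere in the paper, whereas your argument is more elementary in that it avoids any calculation inside the Heyting algebra of filters and isolates exactly where the Stone hypothesis is used, namely that no proper "prime filter" (complement of a prime ideal) can contain both $a^{\top}$ and $a^{\top\top}$.
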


\begin{proof}
Let $a,b \in A$. By Lemma \ref{prop_anni} we have $a^{\top} \veebar b^{\top} \subseteq (a \vee b)^{\top}$. Prove the other inclusion. Let $x \in (a \vee b)^{\top}$. Then $x \vee b \in a^{\top}$ and 
\[
[x \vee b) \cap a^{\top \top} = [x) \cap [b) \cap a^{\top \top} \subseteq a^{\top} \cap a^{\top \top} = \{1\}. 
\]
Thus, $[x) \cap a^{\top \top} \subseteq [b)^{\top} = {\rm{Fig}}(\{b\})^{\top} = b^{\top}$. Indeed, if $y \in [x) \cap a^{\top \top}$ we have $y \vee b \in [x) \cap [b) \cap a^{\top \top} = \{1\}$ and $y \vee b = 1$, i.e., $y \in b^{\top}$. Then, as ${\bf{A}}$ is Stone and ${\rm{Fi}}({\bf{A}})$ is a distributive lattice,  
\[
\begin{array}{rll}
x \in [x) & = & [x) \cap A \\
            & = & [x) \cap \left( a^{\top} \veebar a^{\top \top} \right) \\
            & = & \left( [x) \cap a^{\top} \right) \veebar \left( [x) \cap a^{\top \top} \right) \\
            & \subseteq & a^{\top} \veebar b^{\top}. \\
\end{array}
\]
So, $x \in a^{\top} \veebar b^{\top}$ and $(a \vee b)^{\top} = a^{\top} \veebar b^{\top}$. By Theorem \ref{normal}, ${\bf{A}}$ is normal. 
\end{proof}

Now we can to prove the converse of Corollary \ref{normal-quasi-R(A)}.

\begin{theorem}
Let ${\bf{A}}$ be a distributive nearlattice. Then the following conditions are equivalent:
\begin{enumerate}
\item ${\bf{A}}$ is normal and quasicomplemented.
\item ${\rm{R}}({\bf{A}})$ is a Boolean algebra.
\end{enumerate}
\end{theorem}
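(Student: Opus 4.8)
The plan is to prove the two implications separately, with $(1) \Rightarrow (2)$ being immediate and $(2) \Rightarrow (1)$ carrying the new content. For $(1) \Rightarrow (2)$ I would simply invoke Corollary \ref{normal-quasi-R(A)}, which already asserts that a normal and quasicomplemented distributive nearlattice has ${\rm{R}}({\bf{A}})$ a Boolean algebra.

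For $(2) \Rightarrow (1)$ I would first unpack the lattice operations of the Boolean algebra ${\rm{R}}({\bf{A}})$ in terms of the ambient Heyting algebra ${\rm{Fi}}({\bf{A}})$. Since ${\rm{R}}({\bf{A}})$ is here the nearlattice $\langle {\rm{R}}(A), \overline{m}, A\rangle$, its join is $a^{\top} \veebar b^{\top}$ (this is $\overline{m}(a^{\top},a^{\top},b^{\top})$) and its meet, evaluated against a dense element via $\overline{m}(a^{\top},b^{\top},d^{\top}) = (a^{\top}\veebar\{1\})\cap(b^{\top}\veebar\{1\}) = a^{\top}\cap b^{\top}$, is the set-intersection $\cap$. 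I would take the top to be $A = 1^{\top}$ and then pin down the bottom: the least element is some annihilator $e^{\top}$ contained in every $x^{\top}$, and feeding $x \in e^{\top}$ into $e^{\top} \subseteq x^{\top}$ forces $x \vee x = 1$, hence $e^{\top} = \{1\}$ and $e$ is dense, recovering Remark \ref{cuasi-dense} from the hypothesis.

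Next, for a fixed $a \in A$ I would take the Boolean complement $c^{\top}$ of $a^{\top}$, so that $a^{\top} \cap c^{\top} = \{1\}$ and $a^{\top} \veebar c^{\top} = A$ once the operations are identified as above. This exhibits $a^{\top}$ as a complemented element of ${\rm{Fi}}({\bf{A}})$, so Theorem \ref{Stone_complemento} yields that ${\bf{A}}$ is Stone and Theorem \ref{Stone implica normal} gives normality. Finally, since ${\rm{Fi}}({\bf{A}})$ is a pseudocomplemented distributive lattice, a complemented element has a unique complement and that complement coincides with the pseudocomplement; as the pseudocomplement of $a^{\top}$ is $a^{\top \top}$, I get $c^{\top} = a^{\top \top}$, whence $a^{\top \top} \in {\rm{R}}(A)$ and ${\bf{A}}$ is quasicomplemented.

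The main obstacle I anticipate is the first step: correctly matching the abstract Boolean meet and join on ${\rm{R}}(A)$ with the concrete operations $\cap$ and $\veebar$ of ${\rm{Fi}}({\bf{A}})$. The join side is harmless, since any upper bound dominates $\veebar$ and the top is $A$, so $a^{\top} \veebar c^{\top} = A$ follows at once. The meet side is delicate, because a priori the Boolean meet is only the largest annihilator contained in $a^{\top} \cap c^{\top}$, and it is precisely here that the nearlattice structure of ${\rm{R}}({\bf{A}})$ (equivalently the identity $\overline{m}(a^{\top},b^{\top},d^{\top}) = a^{\top}\cap b^{\top}$ for a dense $d$, which rests on normality through Theorem \ref{normal}) is needed to legitimize $a^{\top} \cap c^{\top} = \{1\}$ and hence the passage to a genuine complement in ${\rm{Fi}}({\bf{A}})$. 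Because ${\rm{R}}({\bf{A}})$ is only defined as this nearlattice under normality, condition $(2)$ tacitly presupposes normality; the route through Theorems \ref{Stone_complemento} and \ref{Stone implica normal} above should then be read as making this explicit rather than as an independent derivation.
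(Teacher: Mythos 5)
Your proposal is correct and follows the same overall route as the paper: $(1)\Rightarrow(2)$ by Corollary \ref{normal-quasi-R(A)}, and $(2)\Rightarrow(1)$ by extracting a complement $c^{\top}$ of $a^{\top}$ in ${\rm{Fi}}({\bf{A}})$ and then invoking Theorems \ref{Stone_complemento} and \ref{Stone implica normal} for normality. The two points where you diverge are both improvements in explicitness rather than in substance. First, you spell out why the Boolean meet and join of ${\rm{R}}({\bf{A}})$ really are $\cap$ and $\veebar$ of ${\rm{Fi}}({\bf{A}})$ (identifying the bottom $e^{\top}=\{1\}$ via a dense element and evaluating $\overline{m}(a^{\top},c^{\top},e^{\top})=a^{\top}\cap c^{\top}$); the paper takes this identification for granted when it writes $a^{\top}\cap b^{\top}=\{1\}$ and $a^{\top}\veebar b^{\top}=A$ for the Boolean complement. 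Second, for quasicomplementedness the paper only gets $b^{\top}\subseteq a^{\top\top}$ from pseudocomplementation and then proves $a^{\top\top}\subseteq b^{\top}$ by a hands-on computation (writing $b=x\wedge y$ with $x\in a^{\top}$, $y\in b^{\top}$ and chasing $z\vee b=1$), whereas you obtain $c^{\top}=a^{\top\top}$ in one stroke from the standard fact that in a bounded distributive pseudocomplemented lattice a complement must coincide with the pseudocomplement; both arguments are valid, and yours is the cleaner abstract version of the same computation. Your closing remark that condition $(2)$ tacitly presupposes normality (since ${\rm{R}}({\bf{A}})$ is only constituted as a nearlattice under that hypothesis) is a fair reading of how the paper itself handles the matter.
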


\begin{proof}
$(1) \Rightarrow (2)$ It follows from Corollary \ref{normal-quasi-R(A)}.

$(2) \Rightarrow (1)$ By Theorem \ref{Stone_complemento} and \ref{Stone implica normal}, ${\bf{A}}$ is normal. Prove that ${\bf{A}}$ is quasicomplemented. Let $a \in A$. Then $a^{\top} \in {\rm{R}}(A)$ and by hypothesis there exists $b^{\top} \in {\rm{R}}(A)$ such that $a^{\top} \cap b^{\top} = \{1\}$ and $a^{\top} \veebar b^{\top} = A$. Since $a^{\top} \cap b^{\top} = \{1\}$ and $a^{\top \top}$ is the pseudocomplement of $a^{\top}$ in ${\rm{Fi}}(A)$ we have $b^{\top} \subseteq a^{\top \top}$. We see the other inclusion. Let $z \in a^{\top \top}$. As $b \in A = a^{\top} \veebar b^{\top} = {\rm{Fig}}(a^{\top} \cup b^{\top})$ then there exist $x \in a^{\top}$ and $y \in b^{\top}$ such that $x \wedge y$ exists and $b = x \wedge y$. So, $z \vee x = 1$. On the other hand, 
\[
z \vee b = z \vee (x \wedge y) = (z \vee x) \wedge (z \vee y) = 1 \wedge (z \vee y) = z \vee y.
\]
As $y \in b^{\top}$, then  
\[
z \vee b = (z \vee y) \vee b = z \vee (y \vee b) = 1
\]
and $z \in b^{\top}$. So, $a^{\top \top} \subseteq b^{\top}$ and the proof is complete.
\end{proof}

\begin{theorem}
Let ${\bf{A}}$ be a quasicomplemented distributive nearlattice. Then the following conditions are equivalent:
\begin{enumerate}
\item ${\bf{A}}$ is Stone.
\item Every prime ideal is contained in a unique prime $\alpha$-ideal.
\end{enumerate}
\end{theorem}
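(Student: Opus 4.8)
The plan is to reduce both implications to results already in hand, so that essentially no new computation is needed. The structural facts I will lean on are: Theorem \ref{Stone implica normal} (Stone implies normal), Theorem \ref{prime alpha implica maximal} (for a normal quasicomplemented nearlattice the prime $\alpha$-ideals are precisely the maximal ideals), Lemma \ref{prop_anni} together with the remark that every maximal ideal is a prime $\alpha$-ideal, and the Corollary asserting that a normal and quasicomplemented distributive nearlattice is Stone. The guiding observation is that, under quasicomplementation, ``prime $\alpha$-ideal'' and ``maximal ideal'' are interchangeable, so that condition $(2)$ becomes a verbatim restatement of normality.

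For $(1)\Rightarrow(2)$ I would first apply Theorem \ref{Stone implica normal} to conclude that $\mathbf{A}$ is normal. Since $\mathbf{A}$ is also quasicomplemented, Theorem \ref{prime alpha implica maximal} shows every prime $\alpha$-ideal is maximal, while Lemma \ref{prop_anni} shows every maximal ideal is a prime $\alpha$-ideal; hence the prime $\alpha$-ideals coincide exactly with the maximal ideals. Normality says each prime ideal is contained in a unique maximal ideal, and under the identification just made this reads as: each prime ideal is contained in a unique prime $\alpha$-ideal, which is precisely $(2)$.

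For $(2)\Rightarrow(1)$ the strategy is to extract normality from $(2)$ and then finish with the Corollary that normal plus quasicomplemented implies Stone (recall $\mathbf{A}$ is quasicomplemented by hypothesis). To prove normality I would show each prime ideal $P$ lies in a unique maximal ideal. For \emph{existence}, a Zorn's Lemma argument suffices: the proper ideals containing $P$ form a nonempty family closed under unions of chains (a union of a chain of proper ideals is again proper, since $1$ lies in none of them), so it has a maximal element, which is a maximal ideal above $P$. For \emph{uniqueness}, observe via Lemma \ref{prop_anni} that any maximal ideal is a prime $\alpha$-ideal; thus if $P$ were contained in two distinct maximal ideals, these would be two distinct prime $\alpha$-ideals containing $P$, contradicting $(2)$. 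Hence $\mathbf{A}$ is normal, and the Corollary then gives that $\mathbf{A}$ is Stone.

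I expect the only delicate point to be the $(2)\Rightarrow(1)$ direction, and within it the existence step: one must confirm that every prime ideal sits beneath \emph{some} maximal ideal before the uniqueness clause of $(2)$ can be brought to bear, and one must be careful that $(2)$ is stated for arbitrary prime ideals so that it legitimately applies to $P$ and its putative maximal extensions. Everything else is bookkeeping with the established characterizations; the real content is simply recognizing that, for quasicomplemented nearlattices, condition $(2)$ is equivalent to normality, and that normality together with quasicomplementation is already known to be equivalent to being Stone.
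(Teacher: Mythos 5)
Your proof is correct. The $(1)\Rightarrow(2)$ direction is essentially identical to the paper's: Stone implies normal (Theorem \ref{Stone implica normal}), the unique maximal ideal above a prime ideal is a prime $\alpha$-ideal by Lemma \ref{prop_anni}, and Theorem \ref{prime alpha implica maximal} converts uniqueness among maximal ideals into uniqueness among prime $\alpha$-ideals. For $(2)\Rightarrow(1)$, however, you take a genuinely different and more modular route. The paper argues directly by contradiction: if $a^{\top}\veebar a^{\top\top}\neq A$, it separates a prime ideal $P$ from $a^{\top}\veebar a^{\top\top}$ via Theorem \ref{prime_theo}, picks $b$ with $a^{\top\top}=b^{\top}$, uses Lemma \ref{prop_anni} to produce maximal ideals $U_{1}\ni a$ and $U_{2}\ni b$ above $P$, and then hypothesis $(2)$ forces $U_{1}=U_{2}$, yielding $1=a\vee b\in U_{1}$. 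You instead observe that the uniqueness clause of $(2)$, combined with the fact that maximal ideals are prime $\alpha$-ideals and a Zorn's Lemma existence argument, already yields normality (note this step does not even use quasicomplementation), and then you finish by citing the corollary that normal plus quasicomplemented implies Stone. Your version buys conceptual clarity --- it isolates the real content as ``$(2)$ is equivalent to normality'' and reuses established machinery --- at the cost of relying on the corollary to Theorem \ref{normal-quasi-R(A)}, whereas the paper's computation is self-contained at this point and exhibits explicitly where quasicomplementation enters. Your flagged delicate points (existence of a maximal ideal above $P$, and the applicability of $(2)$ to that extension) are handled correctly: the union of a chain of proper ideals omits $1$ and is therefore proper, so Zorn applies, and the resulting maximal ideal is prime and an $\alpha$-ideal by the paper's own remarks.
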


\begin{proof}
$(1) \Rightarrow (2)$ Let $P \in {\rm{X}}(A)$. By Theorem \ref{Stone implica normal}, ${\bf{A}}$ is normal and by Definition \ref{def_normal} there is a unique $U \in {\rm{X}}_{m}(A)$ such that $P \subseteq U$. Then by Lemma \ref{prop_anni}, $U \in {\rm{X}}_{\alpha}(A)$. As ${\bf{A}}$ is normal, it follows by Theorem \ref{prime alpha implica maximal} that $U$ is unique.  

$(2) \Rightarrow (1)$ Suppose there is $a \in A$ such that $a^{\top} \veebar a^{\top \top} \subset A$, i.e., there is $x \in A$ such that $x \notin a^{\top} \veebar a^{\top \top}$. By Theorem \ref{prime_theo} there exists $P \in {\rm{X}}(A)$ such that $x \in P$ and $P \cap (a^{\top} \veebar a^{\top \top}) = \emptyset$. So, $P \cap a^{\top} = \emptyset$ and $P \cap a^{\top \top} = \emptyset$. As ${\bf{A}}$ is quasicomplemented, there exists $b \in A$ such that $a^{\top \top} = b^{\top}$. By Lemma \ref{prop_anni} there exist $U_{1}, U_{2} \in {\rm{X}}_{m}(A)$ such that $P \subseteq U_{1}$, $P \subseteq U_{2}$, $a \in U_{1}$ and $b \in U_{2}$. So, $U_{1}, U_{2} \in {\rm{X}}_{\alpha}(A)$ and by hypothesis $U_{1}=U_{2}$. Therefore $a,b \in U_{1}$ and as $a \in b^{\top}$ we get $a \vee b = 1 \in U_{1}$, which is a contradiction. Then ${\bf{A}}$ is Stone.  
\end{proof}

\subsection{$\sigma$-filters}

Our next aim is to characterize the Stone distributive nearlattices through certain particular filters. Let ${\bf{A}}$ be a distributive nearlattice. Let $F$ be a filter of $A$ and consider the set 
\[
\sigma(F) = \{ x \in A \colon x^{\top} \veebar F = A \}.
\]

\begin{lemma} \label{lem_sigma}
Let ${\bf{A}}$ be a distributive nearlattice. If $F$ is a filter of $A$, then $\sigma(F)$ is an $\alpha$-filter such that $\sigma(F) \subseteq F$.
\end{lemma}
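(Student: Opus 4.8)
The plan is to prove three things about $\sigma(F)$: that it is a filter, that it is an $\alpha$-filter, and that it is contained in $F$. I would take the containment first since it is the quickest: if $x \in \sigma(F)$, then $x^{\top} \veebar F = A$, so in particular $x \in x^{\top} \veebar F$. Using the description of the filter generated by a union (recall ${\rm{Fig}}(X)$ consists of finite meets of elements of $[X)$), there exist $u \in x^{\top}$ and $f \in F$ with $u \wedge f$ existing and $x = u \wedge f$. Since $u \in x^{\top}$ means $x \vee u = 1$, and $x \leq u$ forces $x \vee u = u$, we would get $u = 1$, hence $x = 1 \wedge f = f \in F$ because filters are upward closed. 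So $\sigma(F) \subseteq F$.

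Next I would verify $\sigma(F)$ is a filter. Clearly $1 \in \sigma(F)$ since $1^{\top} = \{1\}$ and $\{1\} \veebar F \supseteq F \ni 1$, but more directly $1^{\top} \veebar F = A$ needs checking — actually $1 \in \sigma(F)$ holds because $x=1$ gives $1^{\top}\veebar F$, and one sees $A = \{1\}\veebar F$ fails in general, so instead I verify via $1 \in x^{\top}$ for the element $1$ directly; the cleaner route is to note $1^{\top\top} \subseteq \sigma(F)$ once the $\alpha$-filter property is shown. For upward closure: if $x \in \sigma(F)$ and $x \leq y$, then by Lemma \ref{prop_anni}(3) $x^{\top} \subseteq y^{\top}$, so $y^{\top} \veebar F \supseteq x^{\top} \veebar F = A$, giving $y \in \sigma(F)$. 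For closure under existing meets: if $x, y \in \sigma(F)$ and $x \wedge y$ exists, I use Lemma \ref{prop_anni}(5), $(x \wedge y)^{\top} = x^{\top} \cap y^{\top}$, together with the distributivity of the Heyting algebra ${\rm{Fi}}({\bf{A}})$: since ${\rm{Fi}}({\bf{A}})$ is a distributive lattice, $(x^{\top} \cap y^{\top}) \veebar F = (x^{\top} \veebar F) \cap (y^{\top} \veebar F) = A \cap A = A$, so $x \wedge y \in \sigma(F)$.

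For the $\alpha$-filter property, by the definition I must show $a^{\top \top} \subseteq \sigma(F)$ whenever $a \in \sigma(F)$ (equivalently I could invoke the characterization in Proposition \ref{theocaract1}). Take $a \in \sigma(F)$, so $a^{\top} \veebar F = A$, and let $z \in a^{\top \top}$; I want $z^{\top} \veebar F = A$. The key is that $z \in a^{\top\top}$ forces $a^{\top} \subseteq z^{\top}$: indeed by Lemma \ref{prop_anni}(1) $[z) \subseteq z^{\top\top}$, and more usefully one has that $z \in a^{\top\top}$ yields $z^{\top} \supseteq a^{\top\top\top} = a^{\top}$ via Lemma \ref{prop_anni}(2) and monotonicity — the cleanest justification is that $a^{\top} \subseteq z^{\top}$ follows because every $w \in a^{\top}$ satisfies $w \vee z = 1$ (as $z \in a^{\top\top}$ and $w \in a^{\top}$). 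Hence $z^{\top} \veebar F \supseteq a^{\top} \veebar F = A$, so $z \in \sigma(F)$, and therefore $a^{\top\top} \subseteq \sigma(F)$.

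The main obstacle I anticipate is the $\alpha$-filter step, specifically pinning down the inclusion $a^{\top} \subseteq z^{\top}$ for $z \in a^{\top\top}$ and making sure the argument uses only the definition of $a^{\top\top}$ (that $z$ joins to $1$ with every element of $a^{\top}$) rather than circular appeals; once that inclusion is in hand the rest is monotonicity of $\veebar$. The filter-closure step is routine given the distributive-lattice structure of ${\rm{Fi}}({\bf{A}})$, and the containment $\sigma(F) \subseteq F$ is immediate from the generation formula. I would present the proof in the order containment, filter axioms, then $\alpha$-filter property, since the $\alpha$-property is the substantive content and benefits from having $\sigma(F)$ already established as a filter.
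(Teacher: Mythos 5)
Your argument is essentially the paper's: the containment $\sigma(F)\subseteq F$ via the generation formula for $x^{\top}\veebar F$, the filter axioms via monotonicity of annihilators and distributivity of ${\rm{Fi}}({\bf{A}})$, and the $\alpha$-property via $a^{\top}\subseteq z^{\top}$ for $z\in a^{\top\top}$. All of that is correct. But the one step you visibly stumble on, $1\in\sigma(F)$, is a genuine gap as written, and it is not optional: without it $\sigma(F)$ could a priori be empty, and then it is not a filter (your $\alpha$-filter argument, ``if $a\in\sigma(F)$ then $a^{\top\top}\subseteq\sigma(F)$,'' is vacuous on the empty set). The error is that $1^{\top}\neq\{1\}$; you have confused $1^{\top}$ with $1^{\top\top}$. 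By definition $a^{\top}=\{x\in A\colon x\vee a=1\}$, so $1^{\top}=\{x\in A\colon x\vee 1=1\}=A$, whence $1^{\top}\veebar F=A$ and $1\in\sigma(F)$ at once --- this is exactly the one-line observation the paper makes. Your proposed fallback, deducing $1\in\sigma(F)$ from ``$1^{\top\top}\subseteq\sigma(F)$ once the $\alpha$-filter property is shown,'' is circular: that property only yields $a^{\top\top}\subseteq\sigma(F)$ for $a$ already known to lie in $\sigma(F)$.

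Everything else checks out and coincides with the paper's proof, with one small stylistic difference in the containment step: from $x=u\wedge f$ with $u\in x^{\top}$ and $f\in F$ you conclude $u=1$ (since $x\leq u$ and $x\vee u=1$) and hence $x=1\wedge f=f\in F$, whereas the paper computes $f=f\wedge(u\vee x)=(f\wedge u)\vee(f\wedge x)=x$ using distributivity of the principal filter $[x)$. Both are valid; yours is marginally more elementary. Repair the $1\in\sigma(F)$ step and the proof is complete.
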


\begin{proof}
As $1^{\top}=A$ it follows $1 \in \sigma(F)$. Let $x,y \in A$ such that $x \leq y$ and $x \in \sigma(F)$. Then by Lemma \ref{prop_anni} we have $x^{\top} \subseteq y^{\top}$ and $x^{\top} \veebar F = A$. So, $x^{\top} \veebar F \subseteq y^{\top} \veebar F$ and $y^{\top} \veebar F = A$, i.e., $y \in \sigma(A)$. Let $x,y \in \sigma(F)$ be such that $x \wedge y$ exists. Thus, $x^{\top} \veebar F = A$ and $y^{\top} \veebar F = A$. Since ${\rm{Fi}}({\bf{A}})$ is a distributive lattice, 
\[
(x \wedge y)^{\top} \veebar F = (x^{\top} \cap y^{\top}) \veebar F = (x^{\top} \veebar F) \cap (y^{\top} \veebar F) = A.
\]
Then $x \wedge y \in \sigma(F)$ and $\sigma(F)$ is a filter. We prove that $\sigma(F)$ is an $\alpha$-filter. Let $a \in \sigma(F)$ and $x \in a^{\top \top}$. Then $a^{\top} \subseteq x^{\top}$ and $a^{\top} \veebar F \subseteq x^{\top} \veebar F$. Since $a^{\top} \veebar F = A$ then $x \in \sigma(F)$ and $a^{\top \top} \subseteq \sigma(F)$. Finally, we see $\sigma(F) \subseteq F$. If $x \in \sigma(F)$, then $x^{\top} \veebar F = A$. So, $x \in x^{\top} \veebar F$ and there exist $y \in x^{\top}$ and $f \in F$ such that $y \wedge f$ exists and $x = y \wedge f$. Then $x \leq f$ and 
\[
f = f \wedge 1 = f \wedge (y \vee x) = (f \wedge y) \vee (f \wedge x) = x. 
\]
Hence $x \in F$ and $\sigma(F) \subseteq F$.
\end{proof}

\begin{lemma} \label{Stone_sigma}
Let ${\bf{A}}$ be a Stone distributive nearlattice. If $F$ is a filter of $A$, then $\sigma(F)$ is the greatest $\alpha$-filter such that $\sigma(F) \subseteq F$. 
\end{lemma}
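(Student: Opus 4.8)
The plan is to leverage the previous lemma, Lemma \ref{lem_sigma}, which already does half the work: it guarantees that $\sigma(F)$ is an $\alpha$-filter and that $\sigma(F) \subseteq F$. So the only thing left to establish is the maximality claim, namely that any $\alpha$-filter contained in $F$ is in fact contained in $\sigma(F)$. Thus I would open the proof by invoking Lemma \ref{lem_sigma} for the first two properties, and then reduce the theorem to showing: if $G$ is an $\alpha$-filter with $G \subseteq F$, then $G \subseteq \sigma(F)$.

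The heart of the argument is a short computation combining the $\alpha$-filter hypothesis on $G$ with the Stone identity. I would take an arbitrary $x \in G$ and aim to verify the defining condition $x^{\top} \veebar F = A$ of membership in $\sigma(F)$. Since $G$ is an $\alpha$-filter and $x \in G$, the definition of $\alpha$-filter gives $x^{\top \top} \subseteq G$, and hence $x^{\top \top} \subseteq F$ by the assumption $G \subseteq F$. Because ${\bf{A}}$ is Stone, we have $x^{\top} \veebar x^{\top \top} = A$. Monotonicity of $\veebar$ with respect to inclusion then yields
\[
A = x^{\top} \veebar x^{\top \top} \subseteq x^{\top} \veebar F \subseteq A,
\]
so that $x^{\top} \veebar F = A$, i.e.\ $x \in \sigma(F)$. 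As $x \in G$ was arbitrary, this gives $G \subseteq \sigma(F)$, completing the maximality.

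I do not expect a genuine obstacle here: the statement is essentially a one-line consequence of the Stone condition $a^{\top} \veebar a^{\top \top} = A$ once the $\alpha$-filter property is used to pass from $x$ to $x^{\top \top} \subseteq F$. The only point requiring a little care is to make sure the monotonicity step for $\veebar = \mathrm{Fig}(\,\cdot\,\cup\,\cdot\,)$ is justified, but this is immediate since $x^{\top \top} \subseteq F$ implies $\mathrm{Fig}(x^{\top} \cup x^{\top \top}) \subseteq \mathrm{Fig}(x^{\top} \cup F)$. With this, together with Lemma \ref{lem_sigma}, $\sigma(F)$ is shown to be the greatest $\alpha$-filter contained in $F$.
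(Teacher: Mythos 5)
Your proposal is correct and follows essentially the same route as the paper: invoke Lemma \ref{lem_sigma} for the first two properties, then for $x \in G$ use $x^{\top\top} \subseteq G \subseteq F$ together with the Stone identity $x^{\top} \veebar x^{\top\top} = A$ and monotonicity of $\veebar$ to conclude $x^{\top} \veebar F = A$. No differences worth noting.
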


\begin{proof}
By Lemma \ref{lem_sigma} $\sigma(F)$ is an $\alpha$-filter of $A$ such that $\sigma(F) \subseteq F$. Let $G \in {\rm{Fi}}_{\alpha}(A)$ be such that $G \subseteq F$ and $x \in G$. Since $G$ is an $\alpha$-filter we have $x^{\top \top} \subseteq G$. Then $x^{\top} \veebar x^{\top \top} \subseteq x^{\top} \veebar G \subseteq x^{\top} \veebar F$, and as ${\bf{A}}$ is Stone, $x^{\top} \veebar F = A$ and $x \in \sigma(F)$. 
\end{proof}

\begin{definition}
Let ${\bf{A}}$ be a distributive nearlattice. We say that a filter $F$ of $A$ is a {\it{$\sigma$-filter}} if $\sigma(F) = F$.
\end{definition}

We finish this paper with a characterization of Stone distributive nearlattices.

\begin{theorem} 
Let ${\bf{A}}$ be a distributive nearlattice. Then ${\bf{A}}$ is Stone if and only if every $\alpha$-filter is a $\sigma$-filter.
\end{theorem}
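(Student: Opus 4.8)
The plan is to verify the two implications separately, reducing each to the facts about $\sigma(F)$ recorded in Lemmas~\ref{lem_sigma} and \ref{Stone_sigma}, together with the observation that $a^{\top\top}$ is always an $\alpha$-filter.

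For the forward direction, I would assume $\mathbf{A}$ is Stone and let $F$ be any $\alpha$-filter. By Lemma~\ref{lem_sigma} we already have $\sigma(F)\subseteq F$, so it suffices to prove $F\subseteq\sigma(F)$. Given $x\in F$, the $\alpha$-filter hypothesis yields $x^{\top\top}\subseteq F$, whence $x^{\top}\veebar x^{\top\top}\subseteq x^{\top}\veebar F$; since $\mathbf{A}$ is Stone the left-hand side equals $A$, forcing $x^{\top}\veebar F=A$, i.e.\ $x\in\sigma(F)$. This is exactly the argument appearing in the proof of Lemma~\ref{Stone_sigma} specialized to $G=F$, and it gives $\sigma(F)=F$, so $F$ is a $\sigma$-filter.

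For the converse, I would assume that every $\alpha$-filter is a $\sigma$-filter and fix $a\in A$; the goal is the Stone identity $a^{\top}\veebar a^{\top\top}=A$. The key step is to apply the hypothesis to the filter $a^{\top\top}$, which is an $\alpha$-filter (as noted in Section~\ref{Sec2}). Hence $\sigma(a^{\top\top})=a^{\top\top}$. Since $a\in[a)\subseteq a^{\top\top}$ by Lemma~\ref{prop_anni}, we obtain $a\in\sigma(a^{\top\top})$, and unwinding the definition of $\sigma$ this says precisely $a^{\top}\veebar a^{\top\top}=A$. As $a$ was arbitrary, $\mathbf{A}$ is Stone.

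I do not expect a genuine obstacle here: both directions are one-line manipulations once the right filter is chosen. The only real insight is recognizing, in the converse, that testing membership of $a$ in $\sigma(a^{\top\top})$ directly encodes the Stone condition; choosing $F=a^{\top\top}$ rather than some generated filter is what makes the definition of $\sigma(F)$ collapse to exactly what we want.
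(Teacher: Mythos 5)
Your proposal is correct and follows essentially the same route as the paper: the forward direction is Lemma \ref{lem_sigma} combined with Lemma \ref{Stone_sigma} (you simply inline the latter's argument with $G=F$), and the converse applies the hypothesis to the $\alpha$-filter $a^{\top\top}$ and uses $a\in a^{\top\top}=\sigma(a^{\top\top})$ exactly as the paper does.
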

\begin{proof}
Let $F$ be an $\alpha$-filter of $A$. By Lemma \ref{lem_sigma}, $\sigma(F) \subseteq F$. Since ${\bf{A}}$ is Stone and $F$ is an $\alpha$-filter, by Lemma \ref{Stone_sigma} if follows $F \subseteq \sigma(F)$. So, $F$ is a $\sigma$-filter.

Conversely, let $a \in A$. Then every $\alpha$-filter is a $\sigma$-filter. In particular, the filter $a^{\top \top}$ is an $\alpha$-filter and $a \in a^{\top \top} = \sigma(a^{\top \top})$. So, $a^{\top} \veebar a^{\top \top} = A$.
\end{proof}


\begin{thebibliography}{44}

\bibitem {Abb67} Abbott J.: \textit{Semi-boolean algebra}. Mat. Vestnik \textbf{4} (1967), 177--198.

\bibitem {ArKin11} Ara\'{u}jo J.; Kinyon M.: \textit{Independent axiom systems for nearlattices}. Czech. Math. J. \textbf{61} (2011), 975--992.

\bibitem {CaCe15} Calomino I.; Celani S.: \textit{A note on annihilators in distributive nearlattices}. Miskolc Math. Notes \textbf{16} (2015), 65--78.

\bibitem {CaCe18} Calomino I.; Celani S.: \textit{Annihilator-preserving congruence relations in distributive nearlattices}. Math. Bohem. \textbf{143} (2018), 391--407.

\bibitem {Ca19} Calomino I.: \textit{Note on $\alpha$-filters in distributive nearlattices}. Math. Bohem. \textbf{144} (2019), 241--250.

\bibitem {CaGo21} Calomino I.; Gonz\'{a}lez L. J.: \textit{Remarks on normal distributive nearlattices}. Quaest. Math. \textbf{44} (2021), 513--524.

\bibitem {CaCe21} Calomino I.; Celani S.: \textit{$\alpha$-filters and $\alpha$-order-ideals in quasicomplemented distributive semilattices}. Comment. Math. Univ. Carolinae \textbf{62} (2021), 15--32.

\bibitem {ChaHaKu07} Chajda I.; Hala\v{s} R.; K\"{u}hr J.: \textit{Semilattice Structures}. Heldermann Verlag, Research and Exposition in Mathematics, 2007.

\bibitem {ChaKo07} Chajda I.; Kola\v{r}\'{\i}k M.: \textit{Ideals, congruences and annihilators on nearlattices}. Acta Univ. Palacki. Olomuc., Fac. rer. nat., Math. \textbf{46} (2007), 25--33.

\bibitem {ChaKo08} Chajda I.; Kola\v{r}\'{\i}k M.: \textit{Nearlattices}. Discrete Math. \textbf{308} (2008), 4906--4913.

\bibitem {Cor72} Cornish W.: \textit{Normal lattices}. J. Aust. Math. Soc. \textbf{14} (1972), 200--215.

\bibitem {Cor73} Cornish W.: \textit{Annulets and $\alpha$-ideals in a distributive lattices}. J. Aust. Math. Soc. \textbf{15} (1973), 70--77.

\bibitem {Cor74} Cornish W.: \textit{Quasicomplemented lattices}. Comment. Math. Univ. Carolinae \textbf{15} (1974), 501--511.

\bibitem {Go18} Gonz\'{a}lez L. J.: \textit{The logic of distributive nearlattices}. Soft Comput. \textbf{22} (2018), 2797--2807.

\bibitem {Go22} Gonz\'{a}lez L. J.: \textit{On the logic of distributive nearlattices}. Math. Log. Q. \textbf{68} (2022), 375--385.

\bibitem {Gra98} Gr\"{a}tzer G.: \textit{General Lattice Theory}. Birkh\"{a}user Verlag, Basel, 1998.

\bibitem {Hi80} Hickman R.: \textit{Join algebras}. Commun. Algebra \textbf{8} (1980), 1653--1685.

\bibitem {Ma70} Mandelker M.: \textit{Relative annihilators in lattices}. Duke Math. J. \textbf{37} (1973), 377--386.

\bibitem {Ra20} Rasouli S.: \textit{Quasicomplemented residuated lattices}. Soft Comput. \textbf{24} (2020), 6591--6602.

\bibitem {Ra21} Rasouli S.: \textit{Generalized Stone residuated lattices}. Algebr. Struct. Appl. \textbf{8} (2021), 75--87.

\bibitem {Va68} Varlet J.: \textit{A generalization of the notion of pseudo-complementedness}. Bull. Soc. R. Sci. Li\`{e}ge \textbf{37} (1968), 149--158.

\bibitem {Va73} Varlet J.: \textit{Relative annihilators in semilattices}. Bull. Austral. Math. Soc. \textbf{9} (1973), 169--185.

\end{thebibliography}
\end{document}